\documentclass[a4paper,11pt]{amsproc}
\usepackage{amsmath,amsfonts,amssymb,amsthm,anysize}
\usepackage{enumerate}
\usepackage{epsfig}
\usepackage{supertabular}
\usepackage{graphicx}

\newcommand\N{{\mathbb N}}
\newcommand\Z{{\mathbb Z}}
\newcommand\Q{{\mathbb Q}}
\newcommand\R{{\mathbb R}}

\newcommand\C{{\mathbb C}}
\newcommand\F{{\mathbb F}}

\newcommand\Gal{{\mathrm{Gal}}}

\newcommand\Tr{{\mathrm{Tr}}}

\renewcommand\mod{{\mathrm{mod\, \, }}}

\theoremstyle{plain}
\newtheorem{theorem}{Theorem}[section]
\newtheorem{definition}{Definition}[section]

\newtheorem{lemma}[theorem]{Lemma}
\newtheorem{corollary}[theorem]{Corollary}

\newtheorem{proposition}[theorem]{Proposition}

\numberwithin{equation}{section}

\theoremstyle{remark}
\newtheorem{remark}[theorem]{Remark}

\def\tra{{\mathrm{T}}}

\usepackage{url, hyperref}
\hypersetup{citecolor=blue, linkcolor=blue, colorlinks=true}

\renewcommand\le{\leqslant}
\renewcommand\ge{\geqslant}

\begin{document}

\title[Skew Hadamard difference families and skew Hadamard 
matrices]{Skew Hadamard difference families and \\skew Hadamard 
matrices}


\author{Koji Momihara}
\address{ %
Department of Mathematics\\
Faculty of Education\\
Kumamoto University\\
2-40-1 Kurokami, Kumamoto 860-8555, Japan}
\email{momihara@educ.kumamoto-u.ac.jp}
\thanks{Koji Momihara was supported by 
JSPS under Grant-in-Aid for Young Scientists (B) 17K14236 and Scientific Research (B) 15H03636.}

\author{Qing Xiang}
\address{ %
Department of Mathematical Sciences\\
University of Delaware\\
Newark DE 19716, USA
}
\email{qxiang@udel.edu}
\thanks{Qing Xiang was supported by an NSF grant DMS-1600850, and a JSPS invitational fellowship for research in Japan S17114.}

\subjclass[2010]{
}
\keywords{Difference family; Skew Hadamard difference family; Skew Hadamard matrix; Gauss sum}

\begin{abstract}
In this paper, we generalize classical  constructions of skew Hadamard difference families with two or four blocks in the additive groups of finite fields given by Szekeres (1969, 1971), Whiteman (1971) and Wallis-Whiteman (1972). In particular, we show that there exists a skew Hadamard difference family with $2^{u-1}$ blocks in the additive group of the finite field of order $q^e$ for any prime power $q\equiv 2^u+1\,(\mod{2^{u+1}})$ with $u\ge 2$ and any positive  integer $e$. In the aforementioned work of Szekeres, Whiteman, and Wallis-Whiteman, the constructions of skew Hadamard difference families with $2^{u-1}$ ($u=2$ or $3$) blocks in $(\F_{q^e},+)$ depend on the exponent $e$,  with $e\equiv 1,2,$ or $3\,(\mod{4})$ when $u=2$, and $e\equiv 1\,(\mod{2})$ when $u=3$, respectively. Our more general construction, in particular, removes the dependence on $e$. As a consequence, we obtain new infinite families of skew Hadamard matrices. 
\end{abstract}


\maketitle

\section{Introduction}\label{sec:notat}

A Hadamard matrix of order $n$ is an $n\times n$ matrix $H$ with entries $\pm 1$ such that $HH^{\top}=nI_n$, where $I_n$ is the identity matrix of order $n$. 
It is well known that if $H$ is a Hadamard matrix of order $n$ then $n=1,2$ or $n\equiv 0\pmod 4$. One of the most famous conjectures in combinatorics states that a Hadamard matrix of order $n$ exists for every positive integer 
$n$ divisible by $4$. This conjecture is far from being resolved despite extensive research on the problem. The smallest $n$ for which the existence of a Hadamard matrix of order $n$ is unknown is currently $668$ (see \cite{kt}). 
In this paper, we are interested in Hadamard matrices which are ``skew''.  A Hadamard matrix is called {\it skew} if $H=A+I_n$ and $A^{\top}=-A$. 
See \cite{KS} for a short survey of known constructions of skew Hadamard matrices. One of the most effective methods for constructing 
(skew) Hadamard matrices is by using difference families. Let $(G, +)$ be an additively written abelian group of order $v$. A {\it difference family} with parameters $(v,k,\lambda)$ in $G$ is a family ${\mathcal B}=\{B_i\,|\,i=1,2,\ldots,\ell\}$ of $k$-subsets 
of $G$ such that 
the list of differences ``$x-y, x,y\in B_i,x\not=y,i=1,2,\ldots,\ell$" represents every 
nonzero element of $G$ exactly $\lambda$ times. Each subset $B_i$ is called 
a {\it block} of the difference family.  A block $B_i$ is called {\it skew} if it has the property that $B_i\cap -B_i=\emptyset$ and 
$B_i\cup -B_i=G\setminus \{0_G\}$. If all blocks of a difference family are skew, then the difference family is called {\it skew Hadamard}.

We review two known constructions of skew Hadamard matrices based on 
difference families. Let $X$ be a subset of a finite abelian group $(G, +)$. 
Fixing an ordering for the elements of $G$, we define matrices $M=(m_{i,j})$ and $N=(n_{i,j})$ by   
\[
m_{i,j}=\begin{cases}
1,& \text{ if }  j-i\in X, \\
-1, & \text{ if } j-i\not\in X, 
\end{cases}
\mbox{\, \,  and \, \, }n_{i,j}=\begin{cases}
1,& \text{ if }  j+i\in X, \\
-1, & \text{ if } j+i\not\in X.
\end{cases}
\]
The matrices $M$ and $N$ are called {\it type-1} and  {\it type-2} matrices of $X$, respectively. 
\begin{proposition}\label{prop:hada2ee1}{\em (\cite[Theorem~4.4]{WSW})}
Let ${\mathcal B}=\{B_i\,|\,i=1,2\}$ be a difference family with parameters 
$(v,k,\lambda)=(2m+1,m,m-1)$ such that $B_1$ is skew. Furthermore, let  
$M_1$ be the type-1 matrix of $B_1$ and 
$M_2$ be  the type-2 matrix of $B_2$. Then, 
\begin{equation}
H=\begin{pmatrix} 
1  &1 &  {\bf 1}_v^\tra&  {\bf 1}_v^\tra  \\
-1  & 1 & {\bf 1}_v^\tra&-{\bf 1}_v^\tra \\
-{\bf 1}_v  &  -{\bf 1}_v &-M_1 & -M_2 \\
-{\bf 1}_v  & {\bf 1}_v&M_2 & -M_1  
 \end{pmatrix}
\end{equation}
is a skew Hadamard matrix of order $4(m+1)$. 
\end{proposition}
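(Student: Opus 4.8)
The plan is to verify the two defining properties of a skew Hadamard matrix separately: first that $H-I_n$ is skew-symmetric (where $n=4(m+1)=2+2v$), and then that $HH^{\tra}=nI_n$. Writing $\chi_i\colon G\to\{\pm1\}$ for the $\pm1$-indicator of $B_i$, so that $(M_1)_{i,j}=\chi_1(j-i)$ and $(M_2)_{i,j}=\chi_2(i+j)$, I would first record the elementary facts. The type-2 matrix $M_2$ is symmetric. Since $B_1$ is skew we have $\chi_1(-g)=-\chi_1(g)$ for $g\ne0$ and $\chi_1(0)=-1$, whence $M_1^{\tra}=-M_1-2I_v$, i.e.\ $M_1+I_v$ is skew-symmetric. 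Finally, because $|B_1|=|B_2|=k=m$, each row and column sum of $M_1$ and of $M_2$ equals $2k-v=-1$, so $M_1{\bf 1}_v=M_2{\bf 1}_v=-{\bf 1}_v$ and likewise on the left. These already give the skew-symmetry of $H$: the scalar corner and the ${\bf 1}_v$-borders are visibly skew, each diagonal block $-M_1$ satisfies $(-M_1)+(-M_1)^{\tra}=2I_v$, and the off-diagonal pair $(-M_2,M_2)$ is skew precisely because $M_2$ is symmetric.

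The heart of the Hadamard property is the relation $M_1M_1^{\tra}+M_2M_2^{\tra}=(2v+2)I_v-2J_v$. To obtain it I set $C_i(d)=\sum_{g\in G}\chi_i(g)\chi_i(g+d)$ and check that $(M_1M_1^{\tra})_{i,j}=C_1(i-j)$ and $(M_2M_2^{\tra})_{i,j}=C_2(i-j)$, the latter using that $C_i$ is an even function of $d$. Expanding $\chi_i=2{\bf 1}_{B_i}-1$ gives $C_i(d)=4N_i(d)-4k+v$, where $N_i(d)=\#\{(x,y)\in B_i^2:x-y=d\}$; summing over $i$ and invoking the difference-family condition $\sum_i N_i(d)=\lambda$ for $d\ne0$, with $(k,\lambda,v)=(m,m-1,2m+1)$, collapses to $C_1(d)+C_2(d)=-2$ for $d\ne0$ and $=2v$ for $d=0$. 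This is exactly the asserted matrix identity.

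The step I expect to be the real obstacle is the vanishing of the $(3,4)$ block of $HH^{\tra}$, which equals $-M_1M_2^{\tra}+M_2M_1^{\tra}=-M_1M_2+M_2M_1^{\tra}$; thus I must prove the commutation identity $M_1M_2=M_2M_1^{\tra}$. Rather than a brute-force double sum, I would exploit the algebra of type-1/type-2 matrices. Let $P$ be the reversal permutation matrix $P_{i,j}=\delta_{j,-i}$, so that $P=P^{\tra}=P^{-1}$; then the type-2 matrix of $B_2$ factors as $M_2=PM_2'$, where $M_2'$ is the type-1 matrix of $B_2$. All type-1 matrices are $\Z$-combinations of the commuting translations $R_g$ with $(R_g)_{i,j}=\delta_{j,i+g}$, so they pairwise commute; moreover $M_1^{\tra}$ is itself the type-1 matrix of $-B_1$, and one checks $PM_1P=M_1^{\tra}$, equivalently $M_1P=PM_1^{\tra}$. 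Combining these, $M_1M_2=M_1PM_2'=PM_1^{\tra}M_2'=PM_2'M_1^{\tra}=M_2M_1^{\tra}$, as needed. This identity is purely formal and uses neither the skewness of $B_1$ nor the difference-family parameters.

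Finally I would assemble $HH^{\tra}$ blockwise against the partition into sizes $1,1,v,v$. The four diagonal blocks each evaluate to $nI$: the two scalar blocks give $1+1+v+v=2+2v=n$, and each $v\times v$ diagonal block is $2J_v+M_1M_1^{\tra}+M_2M_2^{\tra}=2J_v+(2v+2)I_v-2J_v=nI_v$ by the correlation identity. For the off-diagonal blocks, the scalar and border contributions cancel using $M_1{\bf 1}_v=M_2{\bf 1}_v=-{\bf 1}_v$ together with the symmetry of $M_2$, while the single $v\times v$ off-diagonal block cancels by the commutation identity of the previous paragraph. Since $HH^{\tra}$ is symmetric, checking the blocks on and above the diagonal suffices, and we conclude $HH^{\tra}=nI_n$, so $H$ is a skew Hadamard matrix of order $4(m+1)$.
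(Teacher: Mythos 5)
Your proof is correct. Note, however, that the paper does not prove this proposition at all: it is quoted verbatim from Wallis--Street--Wallis \cite[Theorem~4.4]{WSW}, so there is no internal proof to compare against. Your verification is essentially the classical argument that the cited source uses: the skew-symmetry of $H-I_n$ follows from $M_1^\tra=-M_1-2I_v$ (skewness of $B_1$) and the symmetry of the type-2 matrix $M_2$; the diagonal blocks of $HH^\tra$ reduce to the correlation identity $M_1M_1^\tra+M_2M_2^\tra=(2v+2)I_v-2J_v$, which is exactly the difference-family condition with $(v,k,\lambda)=(2m+1,m,m-1)$; the mixed scalar/border blocks cancel via the row-sum identity $M_1{\bf 1}_v=M_2{\bf 1}_v=-{\bf 1}_v$; and the $(3,4)$ block vanishes by the type-1/type-2 commutation identity $M_1M_2^\tra=M_2M_1^\tra$, which in \cite{WSW} appears as a standing lemma on type-1 and type-2 matrices over the same abelian group. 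Your derivation of that identity via the factorization $M_2=PM_2'$ (with $P$ the negation permutation) and the observation that type-1 matrices lie in the commutative algebra spanned by the translation matrices $R_g$ is a clean way to package it, and you correctly note it needs neither skewness nor the difference-family parameters. All block computations check out, so the proposal stands as a complete, self-contained proof of the quoted result.
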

Szekeres~\cite{Sz0,Sz} and Whiteman~\cite{Wh}  found two series of skew Hadamard difference families with two blocks in $(\F_q,+)$,  the additive group of the finite field $\F_{q}$ of order $q$. 
\begin{proposition}\label{prop:skewfour}
There exists a skew Hadamard difference family with two blocks in $(\F_q,+)$ if \begin{itemize}
\item[(i)] {\em (\cite{Sz0})} $q\equiv 5\,(\mod{8})$; or 
\item[(ii)] {\em (\cite{Sz,Wh})} $q=p^e$ with $p\equiv 5\,(\mod{8})$ a prime and $e\equiv 2\,(\mod{4})$. 
\end{itemize}
\end{proposition}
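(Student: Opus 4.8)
The plan is to build each block as a union of cyclotomic classes of $\F_q^*$ and to verify the difference-family property through additive characters. Two skew blocks have size $k=(q-1)/2$, so counting differences forces $\lambda=(q-3)/2$, and by the usual Fourier argument a pair $\{B_1,B_2\}$ of $k$-subsets of $\F_q$ is a $(q,k,\lambda)$ difference family if and only if
\[
|\psi(B_1)|^2+|\psi(B_2)|^2=\frac{q+1}{2},\qquad \psi(B):=\sum_{b\in B}\psi(b),
\]
for every nontrivial additive character $\psi$ of $\F_q$. Replacing $\psi$ by $x\mapsto\psi(ax)$ cyclically permutes the Gaussian periods attached to the classes, so it suffices to check this on one orbit representative per class; the whole problem thereby becomes a Gauss-sum identity.

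For part (i), $q\equiv 5\,(\mod{8})$ makes the $2$-adic valuation of $q-1$ equal to $2$, so with index-$4$ classes $C_0,\dots,C_3$ we have $-1\in C_2$, whence $-C_j=C_{j+2}$ and both $B_1=C_0\cup C_1$ and $B_2=C_0\cup C_3$ are skew. Writing $\eta_j=\psi(C_j)$, the relation $-1\in C_2$ gives $\eta_{j+2}=\overline{\eta_j}$; expanding $|\psi(B_1)|^2+|\psi(B_2)|^2$ and using this conjugation symmetry, all the character conditions collapse to the single identity $|\eta_0+\eta_1|^2+|\eta_0+\overline{\eta_1}|^2=(q+1)/2$. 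Combining the Parseval evaluation $\sum_{j}|\eta_j|^2=(3q+1)/4$ with $\operatorname{Re}\eta_0=\tfrac12(\eta_0+\eta_2)$ and $\operatorname{Re}\eta_1=\tfrac12(\eta_1+\eta_3)$, the identity reduces to $\rho_0\rho_1=(1-q)/4$, where the quadratic periods satisfy $\rho_0+\rho_1=-1$ and $\rho_0-\rho_1=G(\chi_2)$ for $\chi_2$ the quadratic character. Since then $\rho_0\rho_1=(1-G(\chi_2)^2)/4$ and $G(\chi_2)^2=\chi_2(-1)q=q$ for $q\equiv 1\,(\mod{4})$, the identity holds. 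The agreeable point is that only the quadratic Gauss sum enters, so no delicate evaluation is needed.

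Part (ii) is genuinely harder because the arithmetic of $q-1$ changes: $q=p^e$ with $p\equiv 5\,(\mod{8})$ and $e\equiv 2\,(\mod{4})$ forces $q\equiv 9\,(\mod{16})$, so at index $4$ one has $-1\in C_0$ and the index-$4$ classes are all symmetric, hence never skew. I would therefore move up to index-$8$ cyclotomy $D_0,\dots,D_7$, where $-1\in D_4$ restores the symmetry $\psi(D_{j+4})=\overline{\psi(D_j)}$ and each union of four classes meeting every negation pair $\{D_j,D_{j+4}\}$ in one class is skew. Choosing two such blocks (for instance $B_1=D_0\cup D_1\cup D_2\cup D_3$ and a suitable reflection $B_2$), the same character computation reduces the difference-family condition to an identity among the octic Gaussian periods.

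The main obstacle is the evaluation of these octic periods. It is clarifying that $e\equiv 2\,(\mod{4})$ says exactly that $\F_q=\F_{q_0^2}$ with $q_0=p^{e/2}\equiv 5\,(\mod{8})$, so part (ii) is really the lift of the index-$4$ construction of part (i) from $\F_{q_0}$ to its quadratic extension. Every quartic character of $\F_{q_0^2}$ descends from $\F_{q_0}$ through the norm $N$, so the quartic Gauss sums are controlled by the Davenport--Hasse relation $G_{\F_{q_0^2}}(\chi\circ N)=-G_{\F_{q_0}}(\chi)^2$ and ultimately by the representation $q_0=a^2+b^2$; the truly octic contributions, however, do not descend and fall into the ``index $2$'' regime, since $q_0\equiv 5\not\equiv-1\,(\mod{8})$. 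The decisive step is to choose the blocks so that these non-descending octic terms cancel, leaving an identity purely in the quartic data that can be matched against $(q+1)/2$. I expect this cancellation, together with the sign supplied by Davenport--Hasse, to be where the hypotheses $p\equiv 5\,(\mod{8})$ and $e\equiv 2\,(\mod{4})$ are used essentially, and to be the precise reason the argument breaks when $e\equiv 0\,(\mod{4})$, where the $2$-adic valuation of $q-1$ is at least $4$ and yet higher-order cyclotomy would be forced.
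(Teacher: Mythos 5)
Your treatment of part (i) is complete and correct. The skewness of $B_1=C_0\cup C_1$ and $B_2=C_0\cup C_3$ follows from $-1\in C_2^{(4,q)}$ when $q\equiv 5\pmod 8$; your character criterion is exactly Lemma~\ref{rem:difffa}(3) with $k=(q-1)/2$, $\ell=2$, $\lambda=(q-3)/2$; the Parseval identity $\sum_{j}|\eta_j|^2=(3q+1)/4$ and the reduction to $\rho_0\rho_1=(1-q)/4=\bigl(1-G_q(\eta')^2\bigr)/4$ are both right, and the conjugation symmetry $\eta_{j+2}=\overline{\eta_j}$ does make all four shifted character conditions coincide. This is a cleaner route than the order-4 cyclotomic-number computation in the cited source \cite{Sz0}, and your family is equivalent to the one this paper builds (your $B_2$ is the negative of the second block of the family in Lemma~\ref{lem:DFexist} with $u=2$).

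Part (ii), however, is a plan rather than a proof, and its decisive step points the wrong way. You never fix $B_2$, never write down the resulting identity among octic periods, and the key claim --- that the blocks can be chosen so that the ``non-descending octic terms cancel, leaving an identity purely in the quartic data'' --- is offered only as an expectation. In fact, for blocks that are unions of four consecutive octic classes (the only kind you propose, and the kind used in \cite{Sz,Wh} and in this paper), exactly the opposite happens: expanding $\psi(\omega^a B_i)$ via the orthogonality relation \eqref{orth} with $N=8$, the contributions of the nontrivial even powers $\chi_8^2,\chi_8^4,\chi_8^6$ --- precisely the characters that descend through the norm and are controlled by Davenport--Hasse lifting --- vanish by a geometric-sum cancellation, while the odd, genuinely octic characters survive and carry the entire content of the difference-family condition. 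Those octic Gauss sums over $\F_{q_0^2}$ cannot be made to cancel; they must be evaluated, and that evaluation is the crux of the whole matter. It is supplied by the Davenport--Hasse \emph{product} formula (which you never invoke), as in Theorem~\ref{prop:Gaussreduc}: for $q_0\equiv 5\pmod 8$ one gets $G_{q_0^2}(\chi_8)=\epsilon\, G_{q_0}(\chi'_4)G_{q_0}(\eta')$ with $\epsilon$ a fourth root of unity, $\chi'_4$ a quartic character and $\eta'$ the quadratic character of $\F_{q_0}$; this is what reduces the octic condition over $\F_{q_0^2}$ to the quartic condition over $\F_{q_0}$ that you verified in part (i). (Equivalently, the classical proofs extract the same information from the order-8 cyclotomic numbers, which depend on the representations $q=a^2+b^2=c^2+2d^2$.) Without this evaluation, part (ii) remains unproven in your write-up.
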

The proofs of the results above are based on 
cyclotomic numbers of 
order four and eight, respectively. 
Szekeres~\cite{Sz} claimed that his proof for Part (ii) of Proposition~\ref{prop:skewfour} works well also for the case where 
$e\equiv 0\,(\mod{4})$. However, in the case where $e\equiv 0\pmod 4$, the two subsets demonstrated in Theorem~1 of \cite{Sz} are not skew. This inconsistency was pointed out in \cite[p.~324]{WSW}, and also in the MathSciNet  mathematical review of \cite{Sz} written by B. M. Stewart. 

\begin{proposition}\label{prop:hada2ee2}{\em (\cite{WW})}
Let ${\mathcal B}=\{B_i\,|\,i=1,2,3,4\}$ be a difference family with parameters 
$(v,k,\lambda)=(2m+1,m,2(m-1))$ such that $B_1$ is skew. Furthermore, let  
$M_1,M_2,M_4$ be the type-1 matrices of $B_1,B_2,B_4$, respectively, and 
$M_3$ be  the type-2 matrix  of $B_3$. Then, 
\begin{equation}
H=\begin{pmatrix} 
1  &-1 & -1  &-1 & -{\bf 1}_v^\tra& - {\bf 1}_v^\tra & -{\bf 1}_v^\tra& - {\bf 1}_v^\tra  \\
1  &1 & 1  &-1 & {\bf 1}_v^\tra& - {\bf 1}_v^\tra & {\bf 1}_v^\tra& - {\bf 1}_v^\tra  \\
1  &-1 & 1  &1 & {\bf 1}_v^\tra& - {\bf 1}_v^\tra & -{\bf 1}_v^\tra&  {\bf 1}_v^\tra  \\
1  &1 & -1  &1 & {\bf 1}_v^\tra&  {\bf 1}_v^\tra & -{\bf 1}_v^\tra&  -{\bf 1}_v^\tra  \\
{\bf 1}_v  &  -{\bf 1}_v &-{\bf 1}_v  & - {\bf 1}_v &-M_1 & -M_2&-M_3&-M_4 \\
{\bf 1}_v  &  {\bf 1}_v &{\bf 1}_v  & - {\bf 1}_v &M_2^T & -M_1^\tra&M_4&-M_3 \\
{\bf 1}_v  & - {\bf 1}_v &{\bf 1}_v  & {\bf 1}_v &M_3 &- M_4^\tra&-M_1&M_2^\tra \\
{\bf 1}_v  &  {\bf 1}_v &-{\bf 1}_v  &  {\bf 1}_v &M_4^\tra & M_3&-M_2&-M_1^\tra \\
 \end{pmatrix}
\end{equation}
is a skew Hadamard matrix of order $8(m+1)$. 
\end{proposition}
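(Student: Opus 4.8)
\emph{Proof proposal.} The plan is to verify directly the two defining properties of a skew Hadamard matrix of order $n:=8(m+1)=4(v+1)$, namely the orthogonality $HH^\tra=nI_n$ and the skew condition $H+H^\tra=2I_n$. I would first write $H$ in $2\times2$ block form $H=\begin{pmatrix}C&D\\E&F\end{pmatrix}$, where $C$ is the $4\times4$ sign matrix in the top-left corner, $D$ (resp.\ $E$) is the $4\times4v$ (resp.\ $4v\times4$) block of signed all-ones vectors $\pm{\bf 1}_v^\tra$ (resp.\ $\pm{\bf 1}_v$), and $F$ is the $4\times4$ array of the $v\times v$ matrices $\pm M_i^{(\tra)}$. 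Before computing I would record the standard facts: type-1 matrices of subsets of $G$ commute with one another and satisfy $M(B)^\tra=M(-B)$; type-2 matrices are symmetric; and if $P$ is the permutation matrix of $g\mapsto-g$ (so $P=P^\tra$, $P^2=I$, $P{\bf 1}_v={\bf 1}_v$, $PJ_vP=J_v$), then the type-2 matrix of $B$ equals $PM(B)$, with the commutation rule $PM(B)=M(B)^\tra P$. I would also note that each $M_i$ has constant row and column sums $-1$, i.e.\ $M_i{\bf 1}_v=-{\bf 1}_v$ and ${\bf 1}_v^\tra M_i=-{\bf 1}_v^\tra$, since each block has size $k=m$ in a group of order $v=2m+1$.

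The second ingredient is the difference-family identity. Writing $\psi_i=2\mathbf{1}_{B_i}-\mathbf{1}_G$ and passing to the group algebra, a short computation using $(v,k,\lambda)=(2m+1,m,2(m-1))$ yields $\sum_{i=1}^{4}M_iM_i^\tra=nI_v-4J_v$. Because the group algebra is commutative, $M_iM_i^\tra=M_i^\tra M_i$ for the type-1 matrices; and for the type-2 matrix $M_3=PM(B_3)$ the factor $P$ cancels (as $M(B_3)M(B_3)^\tra$ is inversion-invariant, hence commutes with $P$), so $M_3M_3^\tra=M(B_3)M(B_3)^\tra$ and the identity is insensitive to using the type-2 form of $B_3$.

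With these in hand, $HH^\tra=nI_n$ splits into three block identities. The two scalar/vector blocks are immediate: $CC^\tra=4I_4$ and $DD^\tra=4vI_4$ (both $C$ and the sign pattern of $D$ are Hadamard of order $4$), giving $CC^\tra+DD^\tra=nI_4$. On the big diagonal block, $EE^\tra$ contributes $4J_v$ in each diagonal $v\times v$ block, while in each diagonal block of $FF^\tra$ the four summands are $M_1M_1^\tra,\dots,M_4M_4^\tra$ in some order, so by the difference-family identity each equals $nI_v-4J_v$ and $EE^\tra+FF^\tra=nI_{4v}$. The cross block $CE^\tra+DF^\tra$ reduces, after using $M_i{\bf 1}_v=-{\bf 1}_v$, to a $4\times4$ sign identity with both sides equal to $4\delta_{ad}$, which holds because the sign patterns of $C$ and of $D,E,F$ are mutually Hadamard. \textbf{The crux is the vanishing of the six off-diagonal $v\times v$ blocks of $FF^\tra$.} Each such block is an alternating sum of four products $M_iM_j^{(\tra)}$: the two pure type-1 products cancel by commutativity, while the two products involving the type-2 matrix $M_3$ cancel by combining commutativity with $PM(B)=M(B)^\tra P$. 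For example, the $(1,2)$ block reduces to $-M_1M_2+M_2M_1-M_3M_4^\tra+M_4M_3$, where the first pair cancels by commutativity and $M_4M_3=M(B_4)PM(B_3)=PM(B_4)^\tra M(B_3)=PM(B_3)M(B_4)^\tra=M_3M_4^\tra$ kills the second pair. I expect this step to be the main obstacle only in the bookkeeping sense: the array is engineered so that these cancellations occur, and the work is to verify all six blocks.

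Finally, for the skew condition $H+H^\tra=2I_n$ I would check the blocks separately. One has $C+C^\tra=2I_4$ directly, and $D+E^\tra=0$ since the sign pattern of $E$ is the negative transpose of that of $D$. For $F+F^\tra=2I_{4v}$ the off-diagonal blocks cancel in pairs by inspection (e.g.\ $-M_2+(M_2^\tra)^\tra=0$, and $-M_3+M_3^\tra=0$ using $M_3^\tra=M_3$ where the type-2 matrix appears), while each diagonal block equals $-(M_1+M_1^\tra)$. Here the skewness of $B_1$ enters decisively: $B_1\cap-B_1=\emptyset$ and $B_1\cup-B_1=G\setminus\{0\}$ force $\psi_1(g)+\psi_1(-g)=0$ for $g\ne0$ and $\psi_1(0)=-1$, whence $M_1+M_1^\tra=-2I_v$ and each diagonal block of $F+F^\tra$ equals $2I_v$. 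Combining the three parts gives $H+H^\tra=2I_n$, so $H=I_n+A$ with $A^\tra=-A$; together with $HH^\tra=nI_n$ this shows $H$ is a skew Hadamard matrix of order $8(m+1)$.
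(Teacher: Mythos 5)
Your proposal is correct, but there is nothing in the paper to compare it against: the paper states this proposition without proof, quoting it from Wallis and Whiteman \cite{WW}. Your direct verification is essentially the classical argument from that literature, and all the ingredients you assemble are the right ones and do suffice: commutativity of type-1 matrices over an abelian group, the relations $PM(B)=M(B)^\tra P$ and $M(B)P=PM(B)^\tra$ for the negation permutation $P$, the row-sum property $M_i{\bf 1}_v=-{\bf 1}_v$, the identity $\sum_{i=1}^{4}M_iM_i^\tra=nI_v-4J_v$ (this is where $\lambda=2(m-1)$ enters, via the $J_v$-coefficient $8(m-1)+4(v-4m)=-4$), and skewness of $B_1$ giving $M_1+M_1^\tra=-2I_v$. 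I checked the five off-diagonal blocks of $FF^\tra$ that you did not write out: each splits, exactly as your block $(1,2)$ does, into a pure type-1 pair killed by commutativity and a mixed pair killed by one of the variants $M_iM_3=M_3M_i^\tra$ or $M_i^\tra M_3=M_3M_i$ of your commutation identity, so the array is indeed engineered for the cancellation you describe and the bookkeeping closes. Two small wording points, neither a gap. First, in the cross block the correct statement is $CE^\tra=4I_4\otimes{\bf 1}_v^\tra$ while $DF^\tra=-4I_4\otimes{\bf 1}_v^\tra$ (the minus sign coming from $M_i{\bf 1}_v=-{\bf 1}_v$), and these cancel because the sign pattern of $E$ equals $C$ and the sign pattern of $F$ equals that of $D$, namely $-C^\tra$; the phrase ``both sides equal $4\delta_{ad}$'' obscures this sign cancellation, which is the whole point. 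Second, ``mutually Hadamard'' is not quite the property used there; what is used is the coincidence of sign patterns just described together with $CC^\tra=D_sD_s^\tra=4I_4$, where $D_s$ denotes the sign pattern of $D$. With those clarifications your proof stands as a complete, self-contained verification of a result the paper leaves to the reference.
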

Wallis and Whiteman~\cite{WW} found one series of skew Hadamard difference families   with four blocks in $(\F_q,+)$ based on cyclotomic numbers of 
order eight. 
\begin{proposition}\label{prop:skewfour2}
There exists a skew Hadamard difference family with four blocks in $(\F_q,+)$ if  $q\equiv 9\,(\mod{16})$. 
\end{proposition}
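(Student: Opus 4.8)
The plan is to realize the four blocks as unions of octic cyclotomic classes and to verify the difference family property through additive characters and Gauss sums, rather than through the cyclotomic numbers of order eight used by Wallis and Whiteman. Write $\F_q^*=\langle g\rangle$ and let $C_0,\dots,C_7$ be the cyclotomic classes of order $8$, so $C_i=g^iC_0$ and $|C_i|=(q-1)/8$. Since $q\equiv 9\,(\mathrm{mod}\ 16)$, the index $f:=(q-1)/8$ is odd and $-1=g^{4f}\in C_4$; hence $-C_i=C_{i+4}$ (indices mod $8$). To a sign vector $\sigma^{(j)}=(\sigma_0^{(j)},\dots,\sigma_3^{(j)})\in\{\pm1\}^4$ I attach the block $B_j$ that contains $C_r$ when $\sigma_r^{(j)}=1$ and $C_{r+4}$ when $\sigma_r^{(j)}=-1$, for $r=0,1,2,3$. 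Each $B_j$ has size $4f=(q-1)/2$ and, because $-C_r=C_{r+4}$, meets each conjugate pair $\{C_r,C_{r+4}\}$ in exactly one class, so $B_j\cap(-B_j)=\emptyset$ and $B_j\cup(-B_j)=\F_q\setminus\{0\}$; thus every such block is automatically skew. The target parameters are $(v,k,\lambda)=(q,(q-1)/2,q-3)$, matching Proposition~\ref{prop:hada2ee2} with $m=(q-1)/2$, and a standard character-table argument shows that $\{B_j\}_{j=1}^4$ is a difference family with these parameters if and only if $\sum_{j=1}^4|\chi(B_j)|^2=q+1$ for every nontrivial additive character $\chi$, where $\chi(B_j)=\sum_{b\in B_j}\chi(b)$ (the trivial character gives an identity automatic for these parameters).

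The skew property yields a decisive simplification. From $B_j\cup(-B_j)=\F_q\setminus\{0\}$ and $\chi(-b)=\overline{\chi(b)}$ one gets $\chi(B_j)+\overline{\chi(B_j)}=-1$, so $\mathrm{Re}\,\chi(B_j)=-\tfrac12$ and $|\chi(B_j)|^2=\tfrac14+(\mathrm{Im}\,\chi(B_j))^2$. Hence the target reduces to $\sum_{j=1}^4(\mathrm{Im}\,\chi(B_j))^2=q$ for all nontrivial $\chi$. Setting $E_r=C_r-C_{r+4}$ (a signed class difference) and writing $\eta_i=\sum_{x\in C_i}\psi(x)$ for the Gaussian periods attached to the canonical additive character $\psi$, one checks $\overline{\eta_i}=\eta_{i+4}$, so each $\chi(E_r)$ is purely imaginary, and $\chi(B_j)-\overline{\chi(B_j)}=\sum_{r=0}^3\sigma_r^{(j)}\chi(E_r)$. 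Squaring and summing over $j$ produces the Gram matrix $M_{r,r'}=\sum_{j=1}^4\sigma_r^{(j)}\sigma_{r'}^{(j)}$, so that $\sum_{j=1}^4(\mathrm{Im}\,\chi(B_j))^2=\tfrac14\sum_{r,r'}M_{r,r'}\,\chi(E_r)\overline{\chi(E_{r'})}$.

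The argument then rests on two computations. For the diagonal I would prove $\sum_{r=0}^3|\chi(E_r)|^2=q$ for every nontrivial $\chi$, uniformly in the coset of its frequency: writing $\eta_i-\eta_{i+4}=\tfrac14\sum_{t\ \mathrm{odd}}\zeta^{-it}g(\phi^t)$ with $\zeta=e^{2\pi i/8}$, $\phi$ a multiplicative character of order $8$, and $g(\phi^t)$ the corresponding Gauss sums, the orthogonality of characters collapses the sum of squares to $\sum_{t\ \mathrm{odd}}|g(\phi^t)|^2=4q$. This is the crux, and it is where one might expect the obstacle to lie: a priori the sum of squares of octic Gaussian periods looks as intractable as the periods themselves, whose evaluation is genuinely hard. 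The point is that only the magnitude fact $|g(\phi^t)|^2=q$ is needed, so the difficult individual evaluations never occur. For the cross terms I would take the four sign vectors $\sigma^{(1)},\dots,\sigma^{(4)}$ to be the rows of a Hadamard matrix of order $4$, forcing $M=4I$ and killing every $r\neq r'$ contribution. Combining the two gives $\sum_{j=1}^4|\chi(B_j)|^2=1+\tfrac14\cdot 4q=q+1$, so $\{B_1,B_2,B_3,B_4\}$ is the required skew Hadamard difference family.
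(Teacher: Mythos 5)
Your proof is correct; I checked the pivotal steps: skewness from $-C_r=C_{r+4}$ when $q\equiv 9\pmod{16}$, the reduction of the difference-family criterion to $\sum_{j=1}^4(\mathrm{Im}\,\chi(B_j))^2=q$ for all nontrivial $\chi$, the Gram-matrix identity, and the diagonal computation $\sum_{r=0}^3|\chi(E_r)|^2=\tfrac{1}{16}\cdot 4\sum_{t\,\mathrm{odd}}|G_q(\phi^t)|^2=q$, which does hold uniformly in the multiplicative shift of the character's frequency. However, your route is genuinely different from the paper's, in both the construction and the verification. The paper attributes this proposition to Wallis--Whiteman, whose proof uses cyclotomic numbers of order eight, and the paper's own proof (Lemma~\ref{lem:DFexist} with $u=3$, i.e.\ Theorem~\ref{thm:main2} with $e=1$) takes the \emph{consecutive} unions $B_h=C_h^{(8,q)}\cup C_{h+1}^{(8,q)}\cup C_{h+2}^{(8,q)}\cup C_{h+3}^{(8,q)}$, $h=0,1,2,3$, and verifies the difference-family property by pure counting: since $-B_h=B_{h+4}$, the sum $\sum_h|B_h\cap(B_h+a)|$ extends over all eight indices, and summing $|C_{i+j}^{(8,q)}\cap(C_{i+h}^{(8,q)}+a)|$ over a full cycle of $i$ collapses everything to terms $|C_{j-h}^{(8,q)}\cap(\F_q^\ast+1)|$ independent of $a$ --- no characters or Gauss sums at all (those enter the paper only to lift the family to $\F_{q^e}$). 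Your blocks are not the paper's: relative to the antipodal pairs $\{C_r,C_{r+4}\}$, the paper's blocks carry sign patterns $(+,+,+,+),(-,+,+,+),(-,-,+,+),(-,-,-,+)$, whose Gram matrix is not $4I$ (its $(0,1)$ entry is $2$), so your cross-term cancellation would not apply to the paper's family; conversely, the paper's collapsing argument is tied to the consecutive-union structure and says nothing about your Hadamard-signed blocks. What your route buys: it isolates the minimal arithmetic input (only $|G_q(\phi^t)|^2=q$, never a cyclotomic number or an evaluation of octic periods), it produces a family different from the one in the paper, and it visibly generalizes to $2^{u-1}$ blocks for every $u\ge 2$ by taking the sign vectors from a Sylvester Hadamard matrix of order $2^{u-1}$. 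What the paper's route buys: a completely elementary base-field argument whose specific blocks are exactly those whose additive character values the Gauss-sum machinery (Proposition~\ref{prop:chararedu}) can track, which is what enables the lift to $\F_{q^e}$ for arbitrary $e$ --- the main new content of the paper.
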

In this paper, we generalize the  results in Propositions~\ref{prop:skewfour} and \ref{prop:skewfour2} using cyclotomic classes of order a power of $2$. 
In general, it is quite difficult to find explicit formulas for 
cyclotomic numbers of high order. In this paper, we overcome this difficulty by evaluating Gauss sums with respect to a multiplicative character of order a power of $2$ by a recursive technique.
 
\begin{theorem}\label{thm:main2}
Let $u\ge 2$ be an integer and $q$ be a prime power such that $q\equiv 2^u+1\,(\mod{2^{u+1}})$. 
Then, there exists a skew Hadamard difference family with $2^{u-1}$ blocks in 
$(\F_{q^e},+)$ for any positive integer $e$. 
\end{theorem}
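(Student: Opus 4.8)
The plan is to build the blocks from cyclotomic classes and to translate both the skew condition and the difference-family condition into a single statement about Gauss sums. Fix a generator $\gamma$ of $\F_{q^e}^\times$, set $s=v_2(q^e-1)$ (so $s=u$ when $e$ is odd and $s=u+v_2(e)$ when $e$ is even, by the hypothesis $q\equiv 2^u+1\,(\mod{2^{u+1}})$), and let $C_0,\dots,C_{2^s-1}$ be the cyclotomic classes of order $2^s$, i.e.\ $C_j=\gamma^j\la\gamma^{2^s}\ra$. Since $s=v_2(q^e-1)$, one checks that $(q^e-1)/2\equiv 2^{s-1}\,(\mod{2^s})$, so $-1\in C_{2^{s-1}}$ and $-C_j=C_{j+2^{s-1}}$; hence a union $B=\bigcup_{j\in S}C_j$ is skew precisely when $S\subset\Z/2^s$ is a transversal of the pairs $\{j,\,j+2^{s-1}\}$. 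I would take the $2^{u-1}$ blocks $B_1,\dots,B_{2^{u-1}}$ to be such unions for carefully chosen transversals $S_1,\dots,S_{2^{u-1}}$, each of size $2^{s-1}$, so that $|B_i|=(q^e-1)/2$ as required for a skew block. Note that for even $e$ the order of cyclotomy genuinely exceeds $2^u$, which is the source of the dependence on $e$ in the classical results.

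Next I would reduce the difference-family property to a character identity. A standard Fourier computation shows that $\mathcal B$ is a difference family if and only if $\sum_{i}|\chi(B_i)|^2$ is constant over all nontrivial additive characters $\chi$ of $\F_{q^e}$, the constant being $k\ell-\lambda=2^{u-3}(q^e+1)$ with $k=(q^e-1)/2$, $\ell=2^{u-1}$. Because each $B_i$ is skew, $\chi(B_i)+\overline{\chi(B_i)}=\chi(\F_{q^e}^\times)=-1$, so $\mathrm{Re}\,\chi(B_i)=-\tfrac12$ automatically for every nontrivial $\chi$. Writing $\delta_i=\mathbf 1_{B_i}-\mathbf 1_{-B_i}$ for the $\pm1$-valued skew indicator, this turns the target into the clean condition
\[
\sum_{i=1}^{2^{u-1}}|\chi(\delta_i)|^2=2^{u-1}q^e\qquad\text{for every nontrivial }\chi .
\]
Thus the theorem becomes the statement that the skew indicators $\delta_i$ form a system with constant aggregate spectral power $\ell q^e$.

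I would then expand $\chi(\delta_i)$ in Gauss sums. Writing the nontrivial additive characters as $x\mapsto\psi(\gamma^bx)$ for the canonical character $\psi$, and using the Gauss periods $\eta_m=\psi(C_m)=\frac1{2^s}\sum_tG(\chi^t)\zeta_{2^s}^{-tm}$ (here $\chi$ has order $2^s$ and $G(\chi^0)=-1$), the anti-periodicity $\epsilon_i(j+2^{s-1})=-\epsilon_i(j)$ of the coefficient sequence in $\delta_i=\sum_j\epsilon_i(j)\mathbf 1_{C_j}$ forces only \emph{odd} $t$, i.e.\ characters of full order $2^s$, to survive in $\widehat{\epsilon_i}(t)=\sum_j\epsilon_i(j)\zeta_{2^s}^{-tj}$. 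Using $|G(\chi^t)|^2=q^e$ for such $t$ together with Parseval on $\Z/2^s$, the ``diagonal'' part of $\sum_i|\chi(\delta_i)|^2$ already equals \emph{exactly} $\ell q^e$. Hence the theorem reduces to the vanishing of every off-diagonal contribution, namely
\[
\sum_{\substack{t,t'\ \mathrm{odd}\\ t-t'\equiv r}}G(\chi^{t})\,\overline{G(\chi^{t'})}\sum_{i}\widehat{\epsilon_i}(t)\,\overline{\widehat{\epsilon_i}(t')}=0\qquad(r\not\equiv 0\ \mathrm{mod}\ 2^s).
\]
I would choose the transversals $S_i$ so that the combinatorial factors $\sum_i\widehat{\epsilon_i}(t)\overline{\widehat{\epsilon_i}(t')}$ kill as many terms as possible; for $e$ odd (where $s=u$ and $\ell=2^{s-1}$) I expect this to dispose of \emph{all} off-diagonal terms, recovering the classical cyclotomic-number arguments with no Gauss-sum values needed.

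The main obstacle is the even-$e$ case, where $s>u$, so there are more full-order characters than blocks and combinatorial cancellation can no longer suffice; one must evaluate the Gauss sums $G(\chi^t)$ themselves. My approach is twofold: apply the Hasse--Davenport product formula recursively to descend in $2$-power order from $2^s$ down to $2^u$ within $\F_{q^e}$, and then use the Davenport--Hasse lifting relation to reduce to order-$2^u$ Gauss sums over $\F_q$ (all order-$2^u$ characters of $\F_{q^e}$ being norm-lifts from $\F_q$), absorbing the dependence on $e$; the quadratic Gauss sum entering the recursion is known since $q^e\equiv 1\,(\mod 4)$. The congruence $q\equiv 2^u+1\,(\mod{2^{u+1}})$ is exactly what pins down the base sums over $\F_q$ and the signs and roots of unity propagated by the recursion, and I expect it to force the surviving products $G(\chi^t)\overline{G(\chi^{t'})}$ to combine so that the displayed off-diagonal sums vanish. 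Controlling these recursively computed $2$-power Gauss sums precisely enough, uniformly in $e$, is the real technical heart of the proof.
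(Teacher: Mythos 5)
Your reformulation is sound as far as it goes: the equivalence of the difference-family property with $\sum_i|\chi(\delta_i)|^2=2^{u-1}q^e$ for every nontrivial additive character $\chi$, the fact that anti-periodicity of the $\epsilon_i$ kills the even spectrum, and the Parseval computation showing the diagonal terms contribute exactly $2^{u-1}q^e$ are all correct. But everything after that is deferred, and what is deferred is the theorem itself. Two things are missing. First, the blocks are never specified: ``carefully chosen transversals'' is not a construction, and the conclusion genuinely depends on the choice. If the blocks are translates of a single skew block by shifts $\sigma_1,\dots,\sigma_{2^{u-1}}$, then the coefficient of a residue $r\not\equiv 0\pmod{2^s}$ factors as $\left(\sum_i\zeta_{2^s}^{-r\sigma_i}\right)\cdot S_r$, where $S_r=\sum_{t-t'\equiv r}G(\chi^t)\overline{G(\chi^{t'})}\,\widehat{\epsilon_0}(t)\overline{\widehat{\epsilon_0}(t')}$; only when the shifts are multiples of $2^{s-u}$ (the paper's family ${\mathcal D}=\{D_{2^{t-u}\ell}\}$ in \eqref{eq:def_D}) does the combinatorial factor vanish for every $r$ with $2^u\nmid r$. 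For other transversals one is left with sums of genuinely distinct products $G(\chi^t)\overline{G(\chi^{t'})}$, $t\not\equiv t'\pmod{2^u}$, and you propose no mechanism for their cancellation. Second, for the residues that do survive ($2^u\mid r$, $r\not\equiv 0\pmod{2^s}$, present exactly when $e$ is even) you say only that you ``expect'' recursively evaluated Gauss sums to force vanishing. That expectation is the entire content of the theorem, and the route you sketch is harder than you acknowledge: the Davenport--Hasse product formula determines $G(\chi^t)$ only up to root-of-unity factors, and controlling these is precisely where the paper has to work --- Theorem~\ref{prop:Gaussreduc} confines the ambiguity to some $\epsilon\in\langle\zeta_{2^u}\rangle$ by a Galois-invariance argument, and Remark~\ref{rem:bbb} together with Proposition~\ref{prop:chararedu} absorbs the unknown $\epsilon=\chi_{2^u}'(\gamma^{-b})$ as a translation by $\gamma^{b}$ inside the base field, after which the problem reduces to a difference family in $(\F_q,+)$ proved by elementary counting (Lemma~\ref{lem:DFexist}). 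Your proposal contains no analogue of these steps.

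It is worth pointing out that, once the shifts are chosen as above, your framework can in fact be closed without evaluating any Gauss sum, by an observation absent from your write-up: since $q\equiv 2^u+1\pmod{2^{u+1}}$, the subgroup $\langle q\rangle\pmod{2^s}$ is exactly $\{1+2^uk\}$, so any two odd $t,t'$ with $t\equiv t'\pmod{2^u}$ satisfy $t'\in t\langle q\rangle$, the characters $\chi^t$ and $\chi^{t'}$ lie in one Frobenius orbit, and $G(\chi^t)=G(\chi^{t'})$ by Lemma~\ref{basic}(4). Hence every surviving product $G(\chi^t)\overline{G(\chi^{t'})}$ equals $q^e$ on the nose, and (using that $\widehat{\epsilon_0}$ is supported on odd $t$) the sum $S_r$ collapses to $q^e\sum_{t}\widehat{\epsilon_0}(t)\overline{\widehat{\epsilon_0}(t-r)}=q^e\,2^s\sum_j\epsilon_0(j)^2\zeta_{2^s}^{-rj}=0$ for $r\not\equiv 0\pmod{2^s}$. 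This is the content your ``technical heart'' actually needs, and it is much lighter than a full recursive evaluation; the paper instead packages the same Frobenius-orbit fact into its reduction to the base field. Until either that argument or the paper's reduction is carried out, what you have is a correct restatement of the theorem, not a proof of it.
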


We emphasize that the exponent $e\geq 1$ can be taken arbitrarily in Theorem~\ref{thm:main2}. In contrast, Propositions~\ref{prop:skewfour} and \ref{prop:skewfour2} depend on $e$.  In the case of Proposition~\ref{prop:skewfour}, the exponent $e$ is limited to $e\equiv 1,2,$ or $3\,(\mod{4})$, and in the case of Proposition~\ref{prop:skewfour2},  the exponent $e$ is limited to $e\equiv 1\,(\mod{2})$.

By applying Propositions~\ref{prop:hada2ee1} and \ref{prop:hada2ee2} to the skew Hadamard difference families arising from Theorem~\ref{thm:main2} with $u=2$ and $u=3$, respectively, we have the following corollaries.  
\begin{corollary}
Let $q$ be a prime power such that $q\equiv 5\,(\mod{8})$ and $e$ be an arbitrary positive integer. 
Then, there exists a skew Hadamard matrix of order $2(q^e+1)$. 
\end{corollary}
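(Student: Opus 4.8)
The plan is to read the corollary as the $u=2$ specialization of Theorem~\ref{thm:main2} fed into Proposition~\ref{prop:hada2ee1}, so the whole argument is a matching of parameters rather than anything substantive. First I would observe that the hypothesis $q\equiv 5\,(\mod{8})$ is exactly $q\equiv 2^u+1\,(\mod{2^{u+1}})$ with $u=2$, since $2^2+1=5$ and $2^{3}=8$. Hence Theorem~\ref{thm:main2} applies and furnishes, for the given arbitrary positive integer $e$, a skew Hadamard difference family $\{B_1,B_2\}$ with $2^{u-1}=2$ blocks in the additive group $(\F_{q^e},+)$.

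Next I would check that this family has precisely the parameters required by Proposition~\ref{prop:hada2ee1}. Writing $v=q^e$ and $m=(q^e-1)/2$, the defining property of a skew block, namely $B_i\cap(-B_i)=\emptyset$ and $B_i\cup(-B_i)=\F_{q^e}\setminus\{0\}$, forces each block to have size $k=(v-1)/2=m$. Counting the list of differences, the two blocks contribute $\ell\,k(k-1)=2m(m-1)$ ordered differences, which must equal $\lambda(v-1)=2m\lambda$, so $\lambda=m-1$. Thus $\{B_1,B_2\}$ is a difference family with parameters $(v,k,\lambda)=(2m+1,m,m-1)$, and $B_1$ is skew, exactly the input hypotheses of Proposition~\ref{prop:hada2ee1}.

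I would then invoke Proposition~\ref{prop:hada2ee1} directly: forming the type-1 matrix $M_1$ of $B_1$ and the type-2 matrix $M_2$ of $B_2$ and assembling the displayed block matrix yields a skew Hadamard matrix of order $4(m+1)$. Finally a one-line computation converts this into the stated order,
\[
4(m+1)=4\left(\frac{q^e-1}{2}+1\right)=2(q^e+1),
\]
completing the argument.

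I do not anticipate a genuine obstacle here; the corollary is a formal consequence of two results already established. The only points demanding care are the bookkeeping that skewness pins down $k=(v-1)/2$ and the difference count giving $\lambda=m-1$, both of which are routine. The real content—producing the skew difference family with the stated congruence on $q$ and with $e$ unrestricted—resides in Theorem~\ref{thm:main2}, which I am permitted to assume.
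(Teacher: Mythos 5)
Your proposal is correct and follows exactly the paper's route: the paper derives this corollary by applying Proposition~\ref{prop:hada2ee1} to the two-block skew Hadamard difference family in $(\F_{q^e},+)$ produced by Theorem~\ref{thm:main2} with $u=2$, which is precisely your argument. Your additional bookkeeping — that skewness forces $k=m=(q^e-1)/2$ and the difference count forces $\lambda=m-1$, so the parameters $(2m+1,m,m-1)$ required by Proposition~\ref{prop:hada2ee1} are met, giving order $4(m+1)=2(q^e+1)$ — is exactly the (unstated) verification behind the paper's one-line deduction.
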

\begin{corollary}
Let $q$ be a prime power such that $q\equiv 9\,(\mod{16})$ and $e$ be an arbitrary positive integer. 
Then, there exists a skew Hadamard matrix of order $4(q^e+1)$. 
\end{corollary}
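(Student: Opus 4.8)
The plan is to recast the existence of the difference family as a set of additive-character conditions and then reduce everything to the evaluation of Gauss sums of $2$-power order. Write $N=q^e$ and $\ell=2^{u-1}$. A skew block has size $k=(N-1)/2$, and a short count of differences forces $\lambda=2^{u-3}(N-3)$; the standard criterion for a $(N,k,\lambda)$ difference family is that $\sum_{i=1}^{\ell}|\psi(B_i)|^2=\ell k-\lambda=2^{u-3}(N+1)$ for every nontrivial additive character $\psi$ of $(\F_N,+)$. I would exploit the skew hypothesis at once: if $B_i\sqcup(-B_i)=\F_N^\ast$ then $\psi(B_i)+\overline{\psi(B_i)}=-1$, so $\operatorname{Re}\psi(B_i)=-\tfrac12$ is automatic and the whole problem collapses to the single imaginary-part condition
\[
\sum_{i=1}^{2^{u-1}}\bigl(\operatorname{Im}\psi(B_i)\bigr)^2=2^{u-3}\,q^e\qquad\text{for all nontrivial }\psi .
\]

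Next I would set up the cyclotomy. A lifting-the-exponent computation (using $q\equiv1\pmod4$, which follows from $u\ge2$) gives $f:=v_2(q^e-1)=u+v_2(e)$, where $v_2$ is the $2$-adic valuation. Let $D_a$ ($a\in\Z_{2^f}$) be the cyclotomic classes of order $2^f$ in $\F_{q^e}^\ast$; since $(q^e-1)/2\equiv 2^{f-1}\pmod{2^f}$ one has $-1\in D_{2^{f-1}}$, hence $-D_a=D_{a+2^{f-1}}$. I would take each block $B_i=\bigcup_{a\in A_i}D_a$ for an index set $A_i\subseteq\Z_{2^f}$ chosen to contain exactly one element of every antipodal pair $\{a,\,a+2^{f-1}\}$; this makes $B_i$ skew, and, writing $\widehat{1_{A_i}}(t)=\sum_{a\in A_i}\zeta^{-ta}$ with $\zeta=\zeta_{2^f}$, skewness is equivalent to $\widehat{1_{A_i}}(t)=0$ for all even $t\neq0$. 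The sets $A_i$ should refine a fixed ``base pattern'' at the order-$2^u$ level (the choices of Szekeres and Wallis--Whiteman being the cases $u=2,3$ with $e$ odd), subject to the two Fourier conditions $\sum_i|\widehat{1_{A_i}}(t)|^2=2^{u+f-2}$ for every odd $t$ (equidistribution of energy) together with the antipodal-transversal property above.

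Expanding $\psi(B_i)$ through Gauss sums, $\psi(B_i)=\tfrac1{2^f}\sum_{t}G(\chi^t)\,\widehat{1_{A_i}}(t)$ with $\chi$ of order $2^f$, the $t=0$ term reproduces the real part $-\tfrac12$ and, by skewness, only odd $t$ contribute to $\operatorname{Im}\psi(B_i)$. Squaring and summing, the diagonal terms $t=t'$ give, via $|G(\chi^t)|^2=q^e$ and the equidistribution identity, precisely the target value $2^{u-3}q^e$ (the bookkeeping is forced by Parseval, since $\sum_{t\text{ odd}}|\widehat{1_{A_i}}(t)|^2=2^{2f-2}$). It therefore remains to prove that the off-diagonal contributions vanish for every $\psi$. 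Replacing $\psi$ by $\psi_b=\psi(b\,\cdot)$ multiplies the $(t,t')$-term by $\zeta^{-(t-t')\,\mathrm{ind}(b)}$, so $b$-independence reduces to showing, for each nonzero frequency $s\in\Z_{2^f}$,
\[
\sum_{\substack{t,t'\text{ odd}\\ t-t'=s}}G(\chi^t)\,\overline{G(\chi^{t'})}\;\sum_{i=1}^{2^{u-1}}\widehat{1_{A_i}}(t)\,\overline{\widehat{1_{A_i}}(t')}=0 .
\]

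The main obstacle is exactly this cancellation, and it is where the arithmetic of $q$ enters. When $e$ is odd one has $f=u$, and I expect the cross-correlation coefficients $\sum_i\widehat{1_{A_i}}(t)\overline{\widehat{1_{A_i}}(t')}$ to vanish purely combinatorially (as one checks directly for the base pattern when $u=2$), so no Gauss-sum evaluation beyond $|G|^2=q^e$ is needed. For even $e$, however, $f>u$, the coefficients no longer vanish, and one must actually evaluate the products $G(\chi^t)\overline{G(\chi^{t'})}$ of $2$-power-order Gauss sums. Here I would run the recursive technique advertised in the introduction: the Davenport--Hasse lifting theorem handles the part of order dividing $2^u$ (which descends to $\F_q$), while the Hasse--Davenport product relation $G(\chi)G(\chi\eta)=\overline{\chi}(4)\,G(\chi^2)G(\eta)$ is applied repeatedly to descend the characters of order $2^{u+1},\dots,2^f$ down to the quadratic base case. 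The hypothesis $q\equiv 2^u+1\pmod{2^{u+1}}$ (equivalently, $q$ has order $2$ modulo $2^{u+1}$ and $q\equiv1\pmod{2^u}$) is what pins down the resulting signs and roots of unity explicitly, making the weighted cross sums collapse to zero. Verifying this cancellation for the chosen refined pattern $A_i$, uniformly in $e$, is the heart of the argument and the step I expect to be the most delicate.
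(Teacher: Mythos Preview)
Your framework is sound---the character criterion, the skewness identity $\operatorname{Re}\psi(B_i)=-\tfrac12$, the reduction to an imaginary-part identity, and the plan to control Gauss sums of $2$-power order via Davenport--Hasse are exactly the right ingredients. But the proposal has a genuine gap: you never commit to a choice of index sets $A_i$, and you explicitly defer the ``off-diagonal cancellation''
\[
\sum_{\substack{t,t'\ \text{odd}\\ t-t'=s}}G(\chi^t)\,\overline{G(\chi^{t'})}\;\sum_{i}\widehat{1_{A_i}}(t)\,\overline{\widehat{1_{A_i}}(t')}=0
\]
to future work, while acknowledging it is the heart of the argument. Without fixing the $A_i$ and actually carrying out this cancellation, there is no proof. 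Moreover, even granting the difference family, you still need to invoke the Wallis--Whiteman array (Proposition~\ref{prop:hada2ee2}) to produce the skew Hadamard matrix of order $4(q^e+1)$; this step is routine but absent from your write-up.

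The paper closes the gap by making very specific choices and then avoiding your frequency-by-frequency cancellation altogether. It takes the blocks to be \emph{intervals} of cyclotomic classes, $D_{2^{t-u}\ell}=\bigcup_{j=0}^{N/2-1}C_{2^{t-u}\ell+j}^{(N,q^f)}$ for $\ell=0,\dots,2^{u-1}-1$, where $N=2^t$ and $q^f=q^e$ with $f$ the $2$-part of $e$ (after first replacing $q$ by $q^s$ with $s$ the odd part of $e$). The key arithmetic observation is that $\langle q\rangle\pmod N$ is exactly $\{1+2^u k:k\}$, so for odd $r$ the Gauss sum $G_{q^f}(\chi_N^{r})$ depends only on $r\bmod 2^u$; a single application of the Davenport--Hasse product formula (not an iterated descent) then yields the closed form $G_{q^f}(\chi_N)=\epsilon\,q^{f/2-1}G_q(\chi'_{2^u})G_q(\eta')$ with $\epsilon\in\langle\zeta_{2^u}\rangle$. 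Plugging this into the inversion formula collapses $\psi_{\F_{q^f}}(\omega^aD_h)$ to an affine expression in $\psi_{\F_q}(\gamma^{b+j_{a,h}}B_0)$, so that $\sum_\ell|\psi(D_{2^{t-u}\ell})|^2$ becomes $\sum_\ell|\psi(B_\ell)|^2$ up to an explicit affine change. The problem thereby reduces to showing that the $2^{u-1}$ interval blocks $B_\ell$ of order-$2^u$ cyclotomy in $\F_q$ form a skew Hadamard difference family, and that is proved by a short, purely combinatorial argument (no Gauss sums at all). In short, the paper never needs to verify your off-diagonal vanishing; it sidesteps it by reducing the character sums in $\F_{q^e}$ wholesale to character sums in $\F_q$.
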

\section{Evaluation of Gauss sums}
Let $\F_q$ be the finite field of order $q=p^r$ with $p$ a prime and 
$\F_q^\ast$ be the multiplicative group of $\F_q$. 
Let $\gamma$ be a primitive element of $\F_q$. For a positive integer 
$N$ dividing $q-1$, define \[
C_i^{(N,q)}=\gamma^i\langle \gamma^N\rangle, \, \, \, \, i=0,1,\ldots,N-1, 
\]
which are called {\it cyclotomic classes} of order $N$. 
We will need to compute additive character values of a union of some cyclotomic classes of order $N$. So we introduce additive characters of finite fields below.

For a positive integer $k$, let $\zeta_k$ be a complex primitive $k$th root of unity. 
Define $\psi_{\F_q}$: $\F_q\to \C^\ast$ by 
\[
\psi_{\F_q}(x)=\zeta_p^{\Tr_{q/p}(x)}, 
\]
where $\Tr_{q/p}(x)$ is the trace function from $\F_q$ to $\F_p$. The map $\psi_{\F_q}$ is a character of the additive group of $\F_q$, and it is called the {\it canonical} additive character of $\F_q$. 
\begin{definition}{\em 
For a multiplicative character 
$\chi$  and the canonical 
additive character $\psi_{\F_q}$ of $\F_q$, the {\it Gauss sum} $G_{q}(\chi)$ of $\F_q$ is defined by
\[
G_q(\chi)=\sum_{x\in \F_q^\ast}\chi(x)\psi_{\F_q}(x).
\]}
\end{definition}
For a multiplicative character $\chi$ of order $N$ of $\F_q$ and $x\in \F_q^\ast$, by the orthogonality of characters~\cite[p.~195, (5.17)]{LN97}, 
the character value of $C_{i}^{(N,q)}$ can be expressed in terms of Gauss 
sums as follows: 
\begin{equation}\label{orth}
\psi_{\F_q}(C_i^{(N,q)})=\frac{1}{N}\sum_{j=0}^{N-1}G_q(\chi^{j})\chi^{-j}(\gamma^i), \; 0\le i\le N-1. 
\end{equation}
We list some basic properties of Gauss sums below, which will be used in Section 3.
\begin{lemma}\label{basic}
The Gauss sums $G_{q}(\chi)$ satisfy the following: 
\begin{enumerate}
\item[1. ] $G_q(\chi)\overline{G_q(\chi)}=q$ if $\chi$ is nontrivial;
\item[2. ] $G_q(\chi^{-1})=\chi(-1)\overline{G_q(\chi)}$;
\item[3. ] $G_q(\chi)=-1$ if $\chi$ is trivial;
\item[4. ] $G_q(\chi^p)=G_q(\chi)$.
\end{enumerate}
\end{lemma}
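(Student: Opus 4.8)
The plan is to derive all four identities directly from the orthogonality relations for characters of $\F_q$, namely $\sum_{x\in\F_q}\psi_{\F_q}(cx)=0$ whenever $c\ne 0$ (and $=q$ when $c=0$), together with $\sum_{x\in\F_q^\ast}\chi(x)=0$ for nontrivial $\chi$. With these in hand, Property~3 is immediate: when $\chi$ is trivial, $G_q(\chi)=\sum_{x\in\F_q^\ast}\psi_{\F_q}(x)=-1$, since the sum over all of $\F_q$ vanishes and removing the $x=0$ term leaves $-1$.

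For Property~1 I would expand $G_q(\chi)\overline{G_q(\chi)}=\sum_{x,y\in\F_q^\ast}\chi(x)\chi^{-1}(y)\psi_{\F_q}(x-y)$ and substitute $x=ty$ with $t\in\F_q^\ast$, turning the double sum into $\sum_{t\in\F_q^\ast}\chi(t)\sum_{y\in\F_q^\ast}\psi_{\F_q}((t-1)y)$. The inner sum equals $q-1$ when $t=1$ and $-1$ otherwise; collecting the two contributions and invoking $\sum_{t\in\F_q^\ast}\chi(t)=0$ yields $q$. For Property~2 I would write $\overline{G_q(\chi)}=\sum_{x\in\F_q^\ast}\chi^{-1}(x)\psi_{\F_q}(-x)$ and replace $x$ by $-x$; this produces the factor $\chi^{-1}(-1)=\chi(-1)$ (using $\chi(-1)=\pm1$) and leaves $G_q(\chi^{-1})$, so rearranging gives the stated identity.

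The only step carrying any content is Property~4. Here I would use that $\chi^p(x)=\chi(x)^p=\chi(x^p)$ because $\chi$ is a homomorphism, and---crucially---that the trace is invariant under the Frobenius, $\Tr_{q/p}(x^p)=\Tr_{q/p}(x)$, whence $\psi_{\F_q}(x^p)=\psi_{\F_q}(x)$. Writing $G_q(\chi^p)=\sum_{x\in\F_q^\ast}\chi(x^p)\psi_{\F_q}(x)=\sum_{x\in\F_q^\ast}\chi(x^p)\psi_{\F_q}(x^p)$ and observing that $x\mapsto x^p$ permutes $\F_q^\ast$, the substitution $y=x^p$ returns $G_q(\chi)$. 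I do not anticipate a genuine obstacle in any part, as these are standard facts (cf.\ \cite{LN97}); the only points requiring mild care are recognizing the Frobenius-invariance of the trace in Property~4 and tracking the sign $\chi(-1)=\pm1$ in Property~2.
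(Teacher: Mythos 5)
Your proof is correct in all four parts: the orthogonality argument for Property~1, the substitution $x\mapsto -x$ for Property~2, the computation for the trivial character in Property~3, and the Frobenius-invariance of the trace ($\Tr_{q/p}(x^p)=\Tr_{q/p}(x)$, so $x\mapsto x^p$ permutes $\F_q^\ast$ while fixing both $\chi$-values and $\psi$-values appropriately) for Property~4. The paper itself states this lemma without any proof, treating these as standard facts from the literature (cf.\ \cite{LN97}, \cite{BEW}), and your derivations are exactly the standard textbook ones, so there is nothing to reconcile between your argument and the paper's.
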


We will need the  {\it Davenport-Hasse lifting formula}, which is stated below.  
\begin{theorem}\label{thm:lift}
{\em (\cite[Theorem~11.5.2]{BEW})}
Let $\chi'$ be a nontrivial multiplicative character of $\F_{q}$ and 
let $\chi$ be the lift of $\chi'$ to $\F_{q^{f}}$, i.e., $\chi(x)=\chi'(x^{\frac{q^f-1}{q-1}})$ for $x\in \F_{q^{f}}$, where $f\geq 2$ is an integer. Then 
\[
G_{q^{f}}(\chi)=(-1)^{f-1}(G_{q}(\chi'))^f. 
\]
\end{theorem}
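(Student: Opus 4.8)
The plan is to prove the lifting formula for every exponent $m\ge 1$ at once, by realizing the Gauss sums $G_{q^m}(\chi_m)$ (writing $\chi_m$ for the lift of $\chi'$ to $\F_{q^m}$) as the power sums of a single rational generating function, the zeta function of the affine line twisted by $\chi'$ and $\psi_{\F_q}$. The starting observation is that the lift is composition with the norm: since $x^{(q^m-1)/(q-1)}=\Norm_{q^m/q}(x)$, we have $\chi_m=\chi'\circ\Norm_{q^m/q}$, and by transitivity of the trace the canonical additive character of $\F_{q^m}$ is $\psi_{\F_q}\circ\Tr_{q^m/q}$. Hence $G_{q^m}(\chi_m)=\sum_{x\in\F_{q^m}^\ast}\chi'(\Norm_{q^m/q}(x))\,\psi_{\F_q}(\Tr_{q^m/q}(x))$, and I want to package these over all $m$ simultaneously.

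First I would attach to each monic polynomial $h(X)=X^d-c_1X^{d-1}+\cdots+(-1)^dc_d$ over $\F_q$ with $c_d\ne 0$ the weight $\lambda(h)=\chi'(c_d)\,\psi_{\F_q}(c_1)$, depending only on the top and bottom symmetric functions of the roots. Since the sum of all roots is additive and the product of all roots is multiplicative under multiplication of polynomials, $\lambda$ is completely multiplicative; moreover, for a monic irreducible $P$ of degree $d$ with a root $\alpha\in\F_{q^d}$ one has $c_1=\Tr_{q^d/q}(\alpha)$ and $c_d=\Norm_{q^d/q}(\alpha)$, so $\lambda(P)=\chi'(\Norm_{q^d/q}(\alpha))\,\psi_{\F_q}(\Tr_{q^d/q}(\alpha))$. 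I would then form the formal power series $L(T)=\sum_{h}\lambda(h)T^{\deg h}$, summed over monic polynomials with nonzero constant term, and evaluate it in two ways. Unique factorization into monic irreducibles (none equal to $X$) together with complete multiplicativity gives the Euler product $L(T)=\prod_{P\ne X}(1-\lambda(P)T^{\deg P})^{-1}$. On the other hand, parametrizing a monic polynomial of degree $d$ by its coefficients $(c_1,\dots,c_d)$ with $c_d\ne 0$ and noting that $\lambda$ involves only $c_1$ and $c_d$, the degree-$d$ contribution factors through $\sum_{c_1\in\F_q}\psi_{\F_q}(c_1)$ as soon as $d\ge 2$; this inner sum vanishes because $\psi_{\F_q}$ is nontrivial, so every term with $d\ge 2$ dies. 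Only degrees $0$ and $1$ survive, yielding $L(T)=1+G_q(\chi')T$, and hence the single clean identity $\prod_{P\ne X}(1-\lambda(P)T^{\deg P})^{-1}=1+G_q(\chi')T$.

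To extract the Gauss sums I would take $\log$ of both sides and compare the coefficient of $T^m$. With $g=G_q(\chi')$ the right side contributes $(-1)^{m-1}g^m/m$, while expanding $-\log(1-\lambda(P)T^{\deg P})$ on the left shows that $m$ times the coefficient of $T^m$ equals $\sum_{d\mid m}d\sum_{\deg P=d}\lambda(P)^{m/d}$. The one step needing genuine care is identifying this last quantity with $G_{q^m}(\chi_m)$: using transitivity of norm and trace for $\F_q\subseteq\F_{q^d}\subseteq\F_{q^m}$ one checks $\Norm_{q^m/q}(\alpha)=\Norm_{q^d/q}(\alpha)^{m/d}$ and $\Tr_{q^m/q}(\alpha)=(m/d)\Tr_{q^d/q}(\alpha)$ for $\alpha\in\F_{q^d}$, so $\lambda(P)^{m/d}=\chi'(\Norm_{q^m/q}(\alpha))\,\psi_{\F_q}(\Tr_{q^m/q}(\alpha))$ for each of the $d$ roots $\alpha\in\F_{q^m}$ of $P$. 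Grouping the elements of $\F_{q^m}^\ast$ by their minimal polynomial over $\F_q$ (whose degree necessarily divides $m$) then gives $\sum_{d\mid m}d\sum_{\deg P=d}\lambda(P)^{m/d}=G_{q^m}(\chi_m)$. Comparing coefficients yields $G_{q^m}(\chi_m)=(-1)^{m-1}g^m$ for every $m\ge 1$, and taking $m=f$ is exactly the claimed formula.

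The main obstacle is not a single hard estimate but the careful bookkeeping in this final identification, matching the factorization-side power sums $\sum_{d\mid m}d\sum_{\deg P=d}\lambda(P)^{m/d}$ to the field-element sum defining $G_{q^m}(\chi_m)$ via compatibility of norm and trace under the lift. I note also that the hypothesis that $\chi'$ is nontrivial guarantees that $G_q(\chi')$ is the genuine Gauss sum appearing in the statement, whereas the vanishing that collapses $L(T)$ relies instead on the nontriviality of the canonical additive character $\psi_{\F_q}$. All the power-series manipulations are purely formal, so no convergence issues arise.
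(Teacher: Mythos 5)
Your proposal is correct, but there is nothing in the paper to compare it against: the paper quotes this statement from \cite[Theorem~11.5.2]{BEW} without proof. What you have reconstructed is the classical Weil/Davenport--Hasse generating-function argument, which is essentially the standard proof in the literature (including the cited reference), so your route is the canonical one rather than a novel alternative. The steps all check out: $\lambda$ is completely multiplicative because $c_1$ and $c_d$ are the first and last elementary symmetric functions of the roots; the collapse $L(T)=1+G_q(\chi')T$ is right, since for $d\ge 2$ the degree-$d$ coefficient factors through $\sum_{c_1\in\F_q}\psi_{\F_q}(c_1)=0$ (and you correctly observe this uses nontriviality of $\psi_{\F_q}$, not of $\chi'$), while in degree one $c_1=c_d$ so the surviving term is exactly $G_q(\chi')$; the logarithmic bookkeeping giving $m\cdot[T^m]\log L(T)=\sum_{d\mid m}d\sum_{\deg P=d}\lambda(P)^{m/d}$ versus $(-1)^{m-1}G_q(\chi')^m$ is accurate; and the final identification with $G_{q^m}(\chi_m)$ is sound because $\Norm_{q^m/q}(\alpha)=\Norm_{q^d/q}(\alpha)^{m/d}$ and $\Tr_{q^m/q}(\alpha)=(m/d)\Tr_{q^d/q}(\alpha)$ for $\alpha\in\F_{q^d}$, these quantities are constant on the $d$ conjugate roots of each irreducible $P$ of degree $d\mid m$, and minimal polynomials of elements of $\F_{q^m}^\ast$ automatically avoid $P=X$, matching the Euler product. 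You also correctly use that $\psi_{\F_q}\circ\Tr_{q^m/q}$ is precisely the canonical additive character of $\F_{q^m}$ in the paper's normalization, and your proof in fact yields the formula for every $m\ge 1$ simultaneously, which is slightly more than the statement demands. I see no gap.
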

The following theorem is often referred to as the  {\it Davenport-Hasse product formula}.  
\begin{theorem}
\label{thm:Stickel2}{\em (\cite[Theorem~11.3.5]{BEW})}
Let $\eta$ be a multiplicative character of order $\ell>1$ of  $\F_{q}$. For  every nontrivial multiplicative character $\chi$ of $\F_{q}$, 
\[
G_{q}(\chi)=\frac{G_{q}(\chi^\ell)}{\chi^\ell(\ell)}
\prod_{i=1}^{\ell-1}
\frac{G_{q}(\eta^i)}{G_{q}(\chi\eta^i)}. 
\]
\end{theorem}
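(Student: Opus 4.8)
The plan is to prove the equivalent \emph{product form} of the identity and then evaluate both sides by direct character-sum manipulation. Clearing the denominators $\prod_{i=1}^{\ell-1}G_q(\chi\eta^i)$ from the stated formula, it suffices to establish
\begin{equation*}
\prod_{i=0}^{\ell-1}G_q(\chi\eta^i)=\chi^{-\ell}(\ell)\,G_q(\chi^\ell)\prod_{i=1}^{\ell-1}G_q(\eta^i).\tag{$\star$}
\end{equation*}
First I would record which Gauss sums are nontrivial: since $\eta$ has order exactly $\ell$, the characters $\eta^i$ are nontrivial for $1\le i\le\ell-1$, while $\chi\eta^i$ is trivial only when $\chi=\eta^{-i}$, i.e. when $\chi^\ell=\varepsilon$. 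Thus the main case is $\chi^\ell\ne\varepsilon$, and the degenerate case $\chi^\ell=\varepsilon$ I would dispose of at once: then $\chi=\eta^j$ for some $1\le j\le\ell-1$, the index $j+i$ runs over all residues mod $\ell$, and the left side of $(\star)$ equals $\prod_{k=0}^{\ell-1}G_q(\eta^k)=-\prod_{k=1}^{\ell-1}G_q(\eta^k)$ by Lemma~\ref{basic}(3), matching the right side since $\chi^{-\ell}(\ell)=\varepsilon(\ell)=1$ and $G_q(\chi^\ell)=-1$. In the main case, Lemma~\ref{basic}(1) gives $|G_q(\lambda)|^2=q$ for each nontrivial $\lambda$, so both sides of $(\star)$ have absolute value $q^{\ell/2}$; this reduces the statement to determining a single root-of-unity constant, in exact analogy with the Gauss multiplication theorem for the classical $\Gamma$-function, which is the heuristic guiding the computation.

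For the main case I would expand the left side as an $\ell$-fold sum
\[
\prod_{i=0}^{\ell-1}G_q(\chi\eta^i)=\sum_{x_0,\dots,x_{\ell-1}\in\F_q^\ast}\chi\Bigl(\prod_{i}x_i\Bigr)\,\eta\Bigl(\prod_i x_i^{\,i}\Bigr)\,\psi_{\F_q}\Bigl(\sum_i x_i\Bigr),
\]
and then exploit homogeneity via the substitution $x_0=t$, $x_i=t\,y_i$ for $1\le i\le\ell-1$ with $t,y_i\in\F_q^\ast$. This extracts a single scaling variable $t$: the $\chi$-factor contributes $\chi^\ell(t)$, the $\eta$-factor contributes $\eta^{\binom{\ell}{2}}(t)$, and the additive character becomes $\psi_{\F_q}\bigl(t(1+\sum_{i=1}^{\ell-1}y_i)\bigr)$, so the inner sum over $t$ is $\sum_{t}\mu(t)\psi_{\F_q}\!\bigl(t(1+\sum_{i=1}^{\ell-1}y_i)\bigr)$ with $\mu=\chi^\ell\eta^{\binom{\ell}{2}}$. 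Carrying out the $t$-sum yields $\overline{\mu}(1+\sum_{i}y_i)\,G_q(\mu)$ on the locus $1+\sum_i y_i\ne0$ and $0$ elsewhere, giving a clean factorization $\prod_{i=0}^{\ell-1}G_q(\chi\eta^i)=G_q(\mu)\,J$, where $J$ is a Jacobi-type sum in $y_1,\dots,y_{\ell-1}$ whose character data involves only $\chi$ and $\eta$.

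The remaining, and hardest, step is to evaluate $J$ and match constants, and here lie the two genuine subtleties. First, $\mu=\chi^\ell\eta^{\binom{\ell}{2}}$ equals $\chi^\ell$ only when $\ell$ is odd; when $\ell$ is even, $\binom{\ell}{2}=(\ell/2)(\ell-1)$ is an odd multiple of $\ell/2$, so $\mu=\chi^\ell\eta^{\ell/2}$ carries the quadratic character $\eta^{\ell/2}$, and one must absorb the associated quadratic Gauss sum together with the value $\eta^{\ell/2}(\ell)$ to recover $G_q(\chi^\ell)$ and the correct power of $\chi^{-\ell}(\ell)$. I expect the cleanest route is to first reduce to the case $\ell$ prime—using $\chi^\ell(\ell)=\chi^\ell(p_1)\chi^\ell(\ell/p_1)$ and the multiplicativity of $(\star)$ under factoring $\ell=p_1\cdot(\ell/p_1)$ through the order-$(\ell/p_1)$ character $\eta^{p_1}$—so that the parity issue is confined to $\ell=2$, where it is exactly the classical quadratic Gauss sum evaluation. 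Second, one must show that $J$ depends on $\chi$ only through the factor $\chi^{-\ell}(\ell)$ and otherwise collapses to $\prod_{i=1}^{\ell-1}G_q(\eta^i)$; the appearance of $\chi^{-\ell}(\ell)$ is forced by the implicit $\ell$-th power scaling $t\mapsto t^\ell$ in the substitution, and pinning this constant down exactly—rather than merely up to the modulus-one ambiguity already controlled by the absolute-value check—is the main obstacle. As a fallback, since $(\star)$ is precisely \cite[Theorem~11.3.5]{BEW}, the evaluation of $J$ may be quoted from there; but the route above makes the origin of each factor transparent.
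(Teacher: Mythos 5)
You should first note that the paper contains no proof of this statement at all: it is quoted from \cite[Theorem~11.3.5]{BEW} as a known result, so your attempt can only be judged on whether it stands as a self-contained proof. It does not. The setup is fine as far as it goes --- the reduction to the product form $(\star)$, the disposal of the degenerate case $\chi^\ell=\varepsilon$, the expansion of $\prod_{i=0}^{\ell-1}G_q(\chi\eta^i)$ as an $\ell$-fold sum, and the extraction of the scaling variable $t$ giving the factor $G_q(\mu)$ with $\mu=\chi^\ell\eta^{\binom{\ell}{2}}$ are all correct computations. But the argument stalls exactly at the crux: the evaluation of the Jacobi-type sum $J$, including the precise constant $\chi^{-\ell}(\ell)$ and the absorption of the quadratic Gauss sum when $\ell$ is even. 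You candidly label this ``the main obstacle'' and do not perform it; the absolute-value check via Lemma~\ref{basic}(1) fixes the answer only up to a root of unity, and determining that root of unity \emph{is} the theorem --- it is why the formula is named after Davenport and Hasse rather than being an exercise. The proposed fallback of quoting \cite[Theorem~11.3.5]{BEW} for the evaluation of $J$ is circular, since that citation is verbatim the statement being proved.

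There is also a concrete error inside the part you did carry out. The claimed ``clean factorization'' $\prod_{i=0}^{\ell-1}G_q(\chi\eta^i)=G_q(\mu)\,J$ rests on the assertion that the $t$-sum vanishes on the locus $1+\sum_{i=1}^{\ell-1}y_i=0$, which is valid only when $\mu$ is nontrivial: if $\mu$ is trivial, that locus contributes $q-1$ per point, not $0$. When $\ell$ is even, $\mu=\chi^\ell\eta^{\ell/2}$, and $\mu$ is trivial precisely when $\chi^\ell=\eta^{\ell/2}$, which can happen with $\chi^\ell$ nontrivial --- i.e.\ strictly inside your ``main case''. So the factorization breaks in a subcase your case analysis does not see, and this subcase (together with the even-$\ell$ parity bookkeeping you flag) would need separate treatment. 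Similarly, the suggested reduction to prime $\ell$ via $\ell=p_1\cdot(\ell/p_1)$ is only gestured at: composing $(\star)$ for $\eta^{p_1}$ with the $p_1$-order instances applied to each $\chi\eta^i$ requires verifying that the constants $\chi^{\ell}(\ell)$ multiply out correctly across the composition, which is again nontrivial bookkeeping that is skipped. In short, the proposal is a reasonable roadmap toward the standard proof, but every step where the theorem's actual content lives is either deferred, incorrect in a subcase, or replaced by a circular citation.
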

In the rest of this paper, we always assume that $q$ is a prime power such that $q\equiv 2^u+1\,(\mod{2^{u+1}})$ with $u\ge 2$. Fix $N=2^t$ with $t\ge u+1$ and put 
$f:=2^{t-u}$. 
Then $q$ has order $f$ modulo $N$; that is, $q^f\equiv 1\pmod N$, and $q^{2^{t-u-1}}\not\equiv 1\pmod N$.

The following  is our main theorem in this section. 
\begin{theorem}\label{prop:Gaussreduc}
Let  $\chi_{N}$ be a multiplicative character of order $N$ of $\F_{q^f}$ and $\omega$ be a primitive element of $\F_{q^f}$. Furthermore, 
let $\chi_{2^u}'$ be a multiplicative character of order $2^u$ of $\F_{q}$ such that 
$\chi_{N}^{2^{t-u}}$ is the lift of $\chi_{2^u}'$, i.e., $\chi_{N}^{2^{t-u}}(\omega)=\chi_{2^u}'(\omega^{\frac{q^f-1}{q-1}})$, and let  
$\eta'$ be the quadratic character of $\F_q$. 
Then, 
there exists 
$\epsilon\in \langle \zeta_{2^u}\rangle$ such that 
\[
G_{q^f}(\chi_{N})=\epsilon q^{f/2-1}G_{q}(\chi_{2^u}') G_{q}(\eta'). 
\]
\end{theorem}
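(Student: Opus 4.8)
The plan is to evaluate $G_{q^f}(\chi_N)$ by applying the Davenport--Hasse product formula (Theorem~\ref{thm:Stickel2}) with $\ell=f=2^{t-u}$ and with $\eta:=\chi_N^{2^u}$, which has order exactly $f$. Writing $\chi=\chi_N$ in that formula gives $\chi^{\ell}=\chi_N^{f}=\chi_N^{2^{t-u}}$, which is precisely the lifted character in the hypothesis, and it expresses $G_{q^f}(\chi_N)$ as $\frac{G_{q^f}(\chi_N^{f})}{\chi_N^{f}(f)}$ times the ratio of the two products $\prod_{i=1}^{f-1}G_{q^f}(\eta^i)$ and $\prod_{i=1}^{f-1}G_{q^f}(\chi_N\eta^i)$. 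The strategy is to evaluate each of these three pieces separately and then solve for $G_{q^f}(\chi_N)^{f}$.

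For the numerator product I would pair the index $i$ with $f-i$. Since $\eta^{f-i}=(\eta^i)^{-1}$ and $\chi_N(-1)=-1$ (because $v_2(q^f-1)=t$ forces $\chi_N(-1)=\zeta_{2^t}^{(q^f-1)/2}=-1$, whence $\eta(-1)=\chi_N(-1)^{2^u}=1$), property~2 of Lemma~\ref{basic} together with property~1 collapses each pair to $q^f$, while the middle term $\eta^{f/2}$ is the quadratic character of $\F_{q^f}$, whose Gauss sum equals $-q^{f/2}$ by lifting $\eta'$ (here $q\equiv1\pmod 4$, so $G_q(\eta')^2=q$). This yields $\prod_{i=1}^{f-1}G_{q^f}(\eta^i)=-q^{(f^2-f)/2}$.

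The denominator product is where the main idea enters. The exponents $1+2^u i$ ($0\le i\le f-1$) run over the coset $\chi_N\langle\eta\rangle$, whose exponent set is $\{e\in(\Z/2^t)^\ast:e\equiv1\pmod{2^u}\}$. Since $v_2(q-1)=u$ and $q$ has order $f=2^{t-u}$ modulo $2^t$, this subgroup is exactly $\langle q\rangle$; that is, $\{\chi_N\eta^i\}$ is the Frobenius orbit $\{\chi_N^{q^j}\}$ of $\chi_N$. Iterating property~4 of Lemma~\ref{basic} gives $G_{q^f}(\chi_N^{q})=G_{q^f}(\chi_N)$, so every factor in the orbit equals $G_{q^f}(\chi_N)$ and $\prod_{i=1}^{f-1}G_{q^f}(\chi_N\eta^i)=G_{q^f}(\chi_N)^{f-1}$. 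Feeding $G_{q^f}(\chi_N^{f})=-(G_q(\chi_{2^u}'))^{f}$ from the lifting formula (Theorem~\ref{thm:lift}, with $f$ even) into the product formula and clearing $G_{q^f}(\chi_N)^{f-1}$, I obtain $G_{q^f}(\chi_N)^{f}=(G_q(\chi_{2^u}'))^{f}\,q^{(f^2-f)/2}/\chi_N^{f}(f)$. Since the right-hand side equals $\bigl(q^{f/2-1}G_q(\chi_{2^u}')G_q(\eta')\bigr)^{f}/\chi_N^{f}(f)$, the quotient $\epsilon:=G_{q^f}(\chi_N)/\bigl(q^{f/2-1}G_q(\chi_{2^u}')G_q(\eta')\bigr)$ satisfies $\epsilon^{f}=\chi_N^{f}(f)^{-1}$, a $2^u$-th root of unity, and hence $\epsilon$ is a $2^t$-th root of unity.

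It remains to sharpen ``$2^t$-th root of unity'' to ``$2^u$-th root of unity,'' for which I would use Galois descent. For $a\equiv1\pmod{2^u}$ the automorphism of $\Q(\zeta_{2^t p})$ fixing $\zeta_p$ and sending $\zeta_{2^t}\mapsto\zeta_{2^t}^{a}$ sends $G_{q^f}(\chi_N)$ to $G_{q^f}(\chi_N^{a})$; but $a\in\langle q\rangle$ by the same subgroup identification, so $G_{q^f}(\chi_N^{a})=G_{q^f}(\chi_N)$. Hence $G_{q^f}(\chi_N)$, and therefore $\epsilon$, lies in the fixed field $\Q(\zeta_{2^u p})$. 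A $2^t$-th root of unity lying in $\Q(\zeta_{2^u p})$ must lie in $\langle\zeta_{2^u}\rangle$, the $2$-part of the group of roots of unity of $\Q(\zeta_{2^u p})$, which gives $\epsilon\in\langle\zeta_{2^u}\rangle$ as claimed. The crux of the whole argument is the identification $\langle q\rangle=\{e\equiv1\pmod{2^u}\}$ in $(\Z/2^t)^\ast$ and the resulting recognition of the denominator product as a single Frobenius orbit; once that product collapses to $G_{q^f}(\chi_N)^{f-1}$ the rest is bookkeeping, and the only delicate point is the final descent needed to lower the order of $\epsilon$ from $2^t$ to $2^u$.
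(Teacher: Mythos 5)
Your proof is correct and takes essentially the same route as the paper's: the Davenport--Hasse product formula with $\ell=f$ and the auxiliary character a power of $\chi_N$, the key identification of the exponents $1+2^u\ell$ with the subgroup $\langle q\rangle \pmod{2^t}$ so that the denominator product collapses to $G_{q^f}(\chi_N)^{f-1}$ by Frobenius invariance, the lifting formula for $\chi_N^f$, and a Galois-invariance argument to cut $\epsilon$ down to a $2^u$-th root of unity. The only cosmetic differences are that you evaluate the middle factor $G_{q^f}(\eta^{f/2})=-q^{f/2}$ up front rather than lifting it at the end, and your descent is phrased via the fixed field of the full subgroup $\{a\equiv 1\pmod{2^u}\}$ where the paper applies the single automorphism $\zeta_N\mapsto\zeta_N^q$ and uses $\gcd(q-1,2^t)=2^u$ --- but since that subgroup equals $\langle q\rangle$, these are the same argument.
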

\begin{proof}
By the Davenport-Hasse product formula (Theorem~\ref{thm:Stickel2}), we have 
\begin{equation}\label{eq:prodG}
G_{q^f}(\chi_{N})=\frac{G_{q^f}(\chi_N^f)}{\chi_N^f(f)}\prod_{i=1}^{f-1}\frac{G_{q^f}(\chi_f^i)}{G_{q^f}(\chi_N\chi_f^i)}, 
\end{equation}
where $\chi_f$ is a fixed multiplicative character of order $f=2^{t-u}$ of $\F_{q^f}$. We can write $\chi_f=\chi_N^{2^u\ell}$ for some odd $\ell$. Then $\chi_N\chi_f=\chi_N^{2^u\ell +1}$. We claim that for any $\ell$, $0\leq \ell\leq 2^{t-u}-1$, $2^u\ell +1 \in \langle q\rangle \pmod N$. This can be seen as follows. Noting that $q$ has order $f=2^{t-u}$ modulo $N$, and $q^i\equiv 1\pmod {2^u}$ for any $i$, we see that  $2^u\ell +1 \in \langle q\rangle \pmod N$ for all $\ell=0,1,\ldots ,2^{t-u}-1$. Now from Property (4) of Lemma~\ref{basic}, it follows that
$$G_{q^f}(\chi_{N})=G_{q^f}(\chi_N\chi_f)=\cdots =G_{q^f}(\chi_N\chi_f^{f-1}).$$
Also from Properties (1) and (2) of Lemma~\ref{basic}, we have 
$G_{q^f}(\chi_f^i)G_{q^f}(\chi_f^{f-i})= q^f$ for $i=1,2,\ldots,f/2-1$. 
Substituting these into \eqref{eq:prodG}, we obtain 
\begin{equation}\label{eq:simprodG}
G_{q^f}(\chi_{N})^{f}=\chi_N^{-f}(f) q^{f(f/2-1)}G_{q^f}(\chi_{N}^f)G_{q^f}(\eta), 
\end{equation}
where $\eta$ is the quadratic character of $\F_{q^f}$. 
Next applying the Davenport-Hasse lifting formula (Theorem~\ref{thm:lift}) to the right hand side of (\ref{eq:simprodG}), we have 
\[
G_{q^f}(\chi_{N})^{f}=\chi_N^{-f}(f) q^{f(f/2-1)}G_{q}(\chi_{2^u}')^f G_{q}(\eta')^f. 
\]
Hence, there exists 
$\epsilon\in \langle \zeta_N\rangle$ such that 
\begin{equation}\label{eq:relation_Gauss}
G_{q^f}(\chi_{N})=\epsilon q^{f/2-1}G_{q}(\chi_{2^u}') G_{q}(\eta'). 
\end{equation}
Define $\tau\in \Gal(\Q(\zeta_p,\zeta_N)/\Q(\zeta_p))$ by 
$\tau(\zeta_{pN})=\zeta_{pN}^{N\ell+q}$, where $\ell$ is the inverse of 
$N$ modulo $p$. Note that $N\ell +q\equiv q \pmod N$ and $N\ell +q\equiv 1\pmod p$. Applying $\tau$ to $G_{q^f}(\chi_{N})/G_{q}(\chi_{2^u}') G_{q}(\eta')$, 
we have 
\begin{align*}
\tau\left(\frac{G_{q^f}(\chi_{N})}{G_{q}(\chi_{2^u}') G_{q}(\eta')}\right)
=&\, \tau\left(\frac{\sum_{x\in \F_{q^f}}\psi_{\F_{q^f}}(x)\chi_N (x)}{
\left(\sum_{x\in \F_{q}}\psi_{\F_{q}}(x)\chi_{2^u}' (x)\right)
\left(\sum_{x\in \F_{q}}\psi_{\F_{q}}(x)\eta' (x)\right)}\right)\\
=&\, \frac{\sum_{x\in \F_{q^f}}\psi_{\F_{q^f}}(x)\chi_N^q (x)}{
\left(\sum_{x\in \F_{q}}\psi_{\F_{q}}(x){\chi'}_{2^u}^q (x)\right)
\left(\sum_{x\in \F_{q}}\psi_{\F_{q}}((x){\eta'}^q (x)\right)}\\
=&\, \frac{G_{q^f}(\chi_{N}^q)}{G_{q}({\chi'}_{2^u}^q) G_{q}({\eta'}^q)}
= \frac{G_{q^f}(\chi_{N})}{G_{q}(\chi_{2^u}') G_{q}(\eta')}. 
\end{align*}
This shows that $\epsilon$ is invariant under the action of $\tau$. It follows that $\epsilon \in \langle \zeta_{2^u}\rangle$. The proof of the proposition is now complete. 
\end{proof} 
\begin{remark}\label{rem:bbb}
Put $\gamma:=\omega^{\frac{q^f-1}{q-1}}$, which is a primitive element of $\F_q$. 
In the proposition above, $\epsilon$ has the form $\epsilon=\chi'_{2^u}(\gamma^{-b})$ for some $b\in \{0,1,\ldots,2^u-1\}$. 
\end{remark}
\section{Skew Hadamard difference families in finite fields}
We retain the same assumptions on $q$, $u$, $N$ and $f$ as specified in Section 2. Define subsets in $\F_{q}$ and $\F_{q^f}$, respectively, by
\begin{equation}\label{def:blocks1}
B_h:=\bigcup_{j=0}^{2^{u-1}-1}C_{h+j}^{(2^u,q)}\subset \F_{q}, \, \, \, h=0,1,\ldots,2^u-1, 
\end{equation}
and 
\begin{equation}\label{def:blocks2}
D_h:=\bigcup_{j=0}^{N/2-1}C_{h+j}^{(N,q^f)}\subset \F_{q^f}, \, \, \, h=0,1,\ldots,N-1. 
\end{equation}
We list some important properties of $B_h$ and $D_h$  below: 
\begin{itemize}
\item[(i)] $-B_h=B_{h+2^{u-1}}$, $B_h\cap -B_h=\emptyset$, $|B_h|=(q-1)/2,$ and  $B_h\cup -B_h=\F_q^\ast$;
\item[(ii)] $-D_h=D_{h+2^{t-1}}$, $D_h\cap -D_h=\emptyset$, $|D_h|=(q^f-1)/2$, and  $D_h\cup -D_h=\F_{q^f}^\ast$. 
\end{itemize}
Hence, both $B_h$ and $D_h$ are skew. 
\subsection{Character values of $D_h$}
We express the (additive) character values of $D_h$ in terms of those of $B_h$. 
\begin{proposition}\label{prop:chararedu}
For $a=0,1,\ldots, N-1$ and $h=0,1,\ldots, N-1$, we have
\[
\psi_{\F_{q^f}}(\omega^a D_h)=\frac{-1+q^{f/2-1}G_q(\eta')}{2}+q^{f/2-1}G_q(\eta')\psi_{\F_q}(\gamma^{b+j_{a,h}}B_0), 
\]
where $j_{a,h}$ is uniquely determined as 
$j_{a,h}=(a+h+j)/2^{t-u}$ for some $j\in \{0,1,\ldots,2^{t-u}-1\}$ such that $2^{t-u}\,|\,(a+h+j)$, and $b$ is given in Remark~\ref{rem:bbb}.
\end{proposition}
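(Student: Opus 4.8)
The plan is to expand both sides via the orthogonality relation \eqref{orth}, using Theorem~\ref{prop:Gaussreduc} as the bridge from $\F_{q^f}$ down to $\F_q$, and then to match the two expansions. Writing $\xi:=\chi_N(\omega)$ for the primitive $N$th root of unity $\chi_N(\omega)$ and applying \eqref{orth} in $\F_{q^f}$, I would first obtain
\[
\psi_{\F_{q^f}}(\omega^a D_h)=\sum_{j=0}^{N/2-1}\psi_{\F_{q^f}}\!\big(C_{a+h+j}^{(N,q^f)}\big)=\frac1N\sum_{k=0}^{N-1}G_{q^f}(\chi_N^k)\,\xi^{-k(a+h)}\sum_{j=0}^{N/2-1}\xi^{-kj}.
\]
The inner geometric sum is the first key simplification: since $\xi^{N/2}=-1$, it equals $N/2$ for $k=0$, vanishes for even $k\ne0$, and equals $-2/(\xi^{-k}-1)$ for odd $k$. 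Thus the trivial term contributes $-\tfrac12$ and only the odd indices $k$ survive.

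For odd $k$ the character $\chi_N^k$ still has order $N=2^t$, so Theorem~\ref{prop:Gaussreduc} applies with associated order-$2^u$ character $(\chi'_{2^u})^k$, giving $G_{q^f}(\chi_N^k)=\epsilon_k\,q^{f/2-1}G_q((\chi'_{2^u})^k)G_q(\eta')$. I would then check that a single $b$ serves all odd $k$: applying the automorphism $\sigma_c\colon\zeta_N\mapsto\zeta_N^c$ (fixing $\zeta_p$, $c$ odd) to the identity for $\chi_N$ sends $G_{q^f}(\chi_N)\mapsto G_{q^f}(\chi_N^c)$ and $G_q(\chi'_{2^u})\mapsto G_q((\chi'_{2^u})^c)$ while fixing $G_q(\eta')$, so by Remark~\ref{rem:bbb} one gets $\epsilon_k=(\chi'_{2^u})^k(\gamma^{-b})=\theta^{-bk}$, where $\theta:=\chi'_{2^u}(\gamma)=\xi^{f}$. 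After pulling out the common factor $q^{f/2-1}G_q(\eta')$, it remains to evaluate $\tfrac1N\sum_{k\ \mathrm{odd}}\theta^{-bk}G_q((\chi'_{2^u})^k)\,\xi^{-k(a+h)}\tfrac{-2}{\xi^{-k}-1}$.

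The heart of the argument is to collapse this $\F_{q^f}$-sum to an $\F_q$-sum. I would group the odd $k$ by $n:=k\bmod 2^u$, writing $k=n+2^u m$ with $m=0,\dots,f-1$; the factors $\theta^{-bk}$ and $G_q((\chi'_{2^u})^k)$ then depend only on $n$, while summing the rest over $m$ turns into a sum over all $f$th roots of unity $w$,
\[
S_n:=\xi^{-n(a+h)}\sum_{w^{f}=1}w^{\,a+h}\,\frac{-2}{\xi^{-n}w-1}.
\]
This is where $j_{a,h}$ is born. Evaluating $S_n$ by polynomial division together with $\sum_{w^{f}=1}(w-d)^{-1}=-fd^{f-1}/(d^{f}-1)$ (with $d=\xi^{n}$, so $d^{f}=\theta^{n}$), the surviving terms simplify to $\tfrac1f S_n=\theta^{-n j_{a,h}}\,\tfrac{-2}{\theta^{-n}-1}$, with the exact exponent $j_{a,h}$ — equivalently $\lceil(a+h)/f\rceil$ — coming out of the range of the non-vanishing division terms. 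I expect this evaluation, and in particular separating the cases $f\mid(a+h)$ and $f\nmid(a+h)$ to pin down the exponent precisely, to be the main obstacle.

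Finally I would recognise the result as the $\F_q$-side expansion of $B_0$. Running the same orthogonality computation in $\F_q$ (even powers of $\chi'_{2^u}$ again drop out, the trivial part giving $-\tfrac12$) yields
\[
\psi_{\F_q}(\gamma^{b+j_{a,h}}B_0)=-\frac12+\frac{1}{2^u}\sum_{n\ \mathrm{odd}}G_q((\chi'_{2^u})^n)\,\theta^{-n(b+j_{a,h})}\,\frac{-2}{\theta^{-n}-1}.
\]
Substituting the value of $S_n$ and using $\tfrac1N=\tfrac1{2^u f}$, the odd-$k$ sum becomes exactly $q^{f/2-1}G_q(\eta')\big(\tfrac12+\psi_{\F_q}(\gamma^{b+j_{a,h}}B_0)\big)$; adding the $-\tfrac12$ from the trivial term gives the asserted identity.
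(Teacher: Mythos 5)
Your proposal is correct --- I checked the key computation (the evaluation of $S_n$ via the logarithmic-derivative identity, in both cases $f\mid(a+h)$ and $f\nmid(a+h)$) and the final reassembly, and everything goes through --- but the computational core is organized differently from the paper. Both arguments share the same skeleton: expand $\psi_{\F_{q^f}}(\omega^a D_h)$ by orthogonality, peel off the trivial term $-\tfrac12$, discard the nonzero even indices, push the odd-index Gauss sums down to $\F_q$ via Theorem~\ref{prop:Gaussreduc} and Remark~\ref{rem:bbb}, and recognize the remaining sum as the $\F_q$-expansion of $\psi_{\F_q}(\gamma^{b+j_{a,h}}B_0)$. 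The difference is in how $j_{a,h}$ emerges. The paper never evaluates a geometric series in closed form: it keeps the sum over block positions $j$ intact, writes odd $i=2^ui_1+i_2$, and sums $\chi_N^{-2^u i_1}(\omega^{a+h+j})$ over $i_1$ first; by orthogonality this is $2^{t-u}$ times the indicator that $2^{t-u}\mid(a+h+j)$, which instantly singles out the positions $J=\{2^{t-u}(j_{a,h}+j')\}$ and hands you $j_{a,h}$ with no rational-function work. Your route sums over $j$ first (geometric series), then over the $f$th roots of unity $w$ via partial fractions, and needs the case split on $f\mid(a+h)$ to pin down the exponent; this is heavier, but it buys two things the paper does not state explicitly: the closed formula $j_{a,h}=\lceil(a+h)/2^{t-u}\rceil$, and --- more importantly --- the Galois-conjugation argument ($\sigma_c:\zeta_N\mapsto\zeta_N^c$ fixing $\zeta_p$) showing that the \emph{same} $b$ of Remark~\ref{rem:bbb} serves every odd power $\chi_N^k$, with $\epsilon_k=\epsilon^k$. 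The paper uses this uniformity silently when it pulls ${\chi'}_{2^u}^{-i_2}(\gamma^b)$ out of the sum for all odd $i_2$ in \eqref{eq:transf1}, so your proof is on this point slightly more complete.
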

\begin{proof}
By (\ref{orth}), we have
\begin{equation}\label{eq:chara}
\psi_{\F_{q^f}}(\omega^a D_h)=\frac{1}{N}\sum_{j=0}^{N/2-1}\sum_{i=0}^{N-1}
G_{q^f}(\chi_N^i)\chi_N^{-i}(\omega^{a+h+j}). 
\end{equation} 
Since $\sum_{j=0}^{N/2-1}\chi_N^{-i}(\omega^{a+h+j})=0$ if $i$, $0\leq i\leq N-1$, is a nonzero even integer, continuing from \eqref{eq:chara}, we have 
\begin{equation}\label{eq:chara2}
\psi_{\F_{q^f}}(\omega^a D_h)=-\frac{1}{2}+\frac{1}{N}\sum_{j=0}^{N/2-1}\sum_{i: \mbox{\tiny{odd}}}
G_{q^f}(\chi_N^i)\chi_N^{-i}(\omega^{a+h+j}). 
\end{equation}
For each odd $i$, $0\leq i\leq N-1$, write 
$i=2^u i_1+i_2$ with $i_1\in \{0,1,\ldots,2^{t-u}-1\}$ and $i_2\in \{1,3,\ldots,2^u-1\}$. Note that since $2^u\ell +1\in \langle q\rangle \pmod N$, we have $G_{q^f}(\chi_N^{2^u i_1+i_2})=G_{q^f}(\chi_N^{2^u i_1'+i_2})$ for 
any $i_1,i_1'=0,1,\ldots,2^{t-u}-1$.  It follows that 
\begin{equation}\label{eq:Gaussto1}
\sum_{j=0}^{N/2-1}\sum_{i: \mbox{\tiny{odd}}}
G_{q^f}(\chi_N^i)\chi_N^{-i}(\omega^{a+h+j})=
\sum_{i_2: \mbox{\tiny{odd}}}G_{q^f}(\chi_N^{i_2})\sum_{j=0}^{N/2-1}
\sum_{i_1=0}^{2^{t-u}-1}\chi_N^{-2^ui_1-i_2}(\omega^{a+h+j}). 
\end{equation}
Let $J:=\{a+h+j\,|\,a+h+j\equiv 0\,(\mod{2^{t-u}}), j=0,1,\ldots,N/2-1\}$. 
There is a unique $j \in \{0,1,\ldots,2^{t-u}-1\}$ such that 
$a+h+j\equiv 0\,(\mod{2^{t-u}})$; for such a $j$, write $a+h+j=2^{t-u}j_{a,h}$. Then, we have 
$J=\{2^{t-u}(j_{a,h}+j')\,|\,j'=0,1,\ldots,2^{u-1}-1\}$. 
Hence, 
\begin{align*}
\sum_{j=0}^{N/2-1}
\sum_{i_1=0}^{2^{t-u}-1}\chi_N^{-2^ui_1-i_2}(\omega^{a+h+j})=&\,
\sum_{j=0}^{N/2-1}\chi_N^{-i_2}(\omega^{a+h+j})
\sum_{i_1=0}^{2^{t-u}-1}\chi_N^{-2^ui_1}(\omega^{a+h+j})\\
=&\,
2^{t-u}\sum_{j'=0}^{2^{u-1}-1}\chi_N^{-i_2}(\omega^{2^{t-u}(j_{a,h}+j')})
= 
2^{t-u}\sum_{j'=0}^{2^{u-1}-1}{\chi'}_{2^u}^{-i_2}(\gamma^{j_{a,h}+j'}). 
\end{align*}
Applying Theorem~\ref{prop:Gaussreduc} and Remark~\ref{rem:bbb}, continuing from \eqref{eq:Gaussto1}, we have 
\begin{align}\label{eq:transf1}
& \, \sum_{i_2: \mbox{\tiny{odd}}}G_{q^f}(\chi_N^{i_2})\sum_{j=0}^{N/2-1}
\sum_{i_1=0}^{2^{t-u}-1}\chi_N^{-2^ui_1-i_2}(\omega^{a+h+j})\nonumber\\
=\nonumber&\,
q^{f/2-1}G_q(\eta')\sum_{i_2: \mbox{\tiny{odd}}}G_{q}({\chi'}_{2^u}^{i_2})\sum_{j=0}^{N/2-1}
\sum_{i_1=0}^{2^{t-u}-1}\chi_N^{-2^ui_1-i_2}(\omega^{a+h+j}){\chi'}_{2^u}^{-i_2}(\gamma^b)\\
=&\,
2^{t-u}q^{f/2-1}G_q(\eta')\sum_{j'=0}^{2^{u-1}-1}\sum_{i_2: \mbox{\tiny{odd}}}G_{q}({\chi'}_{2^u}^{i_2}){\chi'}_{2^u}^{-i_2}(\gamma^{b+j_{a,h}+j'}).
\end{align}
Since $\sum_{j'=0}^{2^{u-1}-1}{\chi'}_{2^u}^{-i}(\gamma^{b+j_{a,h}+j'})=0$ for any nonzero even $i$, we have 
\begin{equation}\label{eq:trans2}
\sum_{j'=0}^{2^{u-1}-1}\sum_{i_2: \mbox{\tiny{odd}}}G_{q}({\chi'}_{2^u}^{i_2}){\chi'}_{2^u}^{-i_2}(\gamma^{b+j_{a,h}+j'})=2^{u-1}+2^u\psi_{\F_q}(\gamma^{b+j_{a,h}}B_0). 
\end{equation}
Thus, by combining \eqref{eq:chara2}, \eqref{eq:Gaussto1}, \eqref{eq:transf1} and \eqref{eq:trans2}, we obtain
\begin{align*}
\psi_{\F_{q^f}}(\omega^a D_h)=
\frac{-1+q^{f/2-1}G_q(\eta')}{2}+q^{f/2-1}G_q(\eta')\psi_{\F_q}(\gamma^{b+j_{a,h}}B_0). 
\end{align*}
This completes the proof of the proposition. 
\end{proof}

\begin{remark}\label{rem:hhh}
For  $j_{a,h}$ defined in Proposition~\ref{prop:chararedu}, we have $j_{a,h+2^{t-u}\ell}=j_{a,h}+\ell$ for any $\ell\in \Z$. 
\end{remark}
\begin{corollary}\label{coro:skewtimes}
For $a=0,1,\ldots,N-1$ and $h=0,1,\ldots ,N-1$, we have
\[
\psi_{\F_{q^f}}(\omega^a D_h)\overline{\psi_{\F_{q^f}}(\omega^a  D_h)}=\frac{1-q^{f-1}}{4}+q^{f-1}
\psi_{\F_q}(\gamma^{b+j_{a,h}}B_0)\overline{\psi_{\F_q}(\gamma^{b+j_{a,h}}B_0)}. 
\]
\end{corollary}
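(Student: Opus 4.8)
The plan is to derive Corollary~\ref{coro:skewtimes} directly from the explicit formula for $\psi_{\F_{q^f}}(\omega^a D_h)$ established in Proposition~\ref{prop:chararedu}, by computing the product of that expression with its complex conjugate. First I would abbreviate the notation: write $g:=q^{f/2-1}G_q(\eta')$ and $P:=\psi_{\F_q}(\gamma^{b+j_{a,h}}B_0)$, so that Proposition~\ref{prop:chararedu} reads
\[
\psi_{\F_{q^f}}(\omega^a D_h)=\frac{-1+g}{2}+g\,P.
\]
Then $\psi_{\F_{q^f}}(\omega^a D_h)\overline{\psi_{\F_{q^f}}(\omega^a D_h)}=\left|\tfrac{-1+g}{2}+gP\right|^2$, and the whole task is to expand this modulus squared and simplify.

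The key arithmetic input is the value of $g\overline{g}$. By Property~1 of Lemma~\ref{basic}, $G_q(\eta')\overline{G_q(\eta')}=q$ since $\eta'$ is nontrivial, so $g\overline{g}=q^{f-2}\cdot q=q^{f-1}$. The crucial simplifying feature is that $\eta'$ is the \emph{quadratic} character, hence real-valued, so $G_q(\eta')$ is (up to sign) a real or purely imaginary quantity governed by Property~2 of Lemma~\ref{basic}: $G_q(\eta'^{-1})=\eta'(-1)\overline{G_q(\eta')}$, and since $\eta'^{-1}=\eta'$ this gives $\overline{G_q(\eta')}=\eta'(-1)G_q(\eta')$, i.e. $g=\eta'(-1)\overline{g}$ up to the real factor $q^{f/2-1}$. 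Under the standing hypothesis $q\equiv 2^u+1\equiv 5\pmod 8$ (at least $q\equiv1\pmod4$), one has $\eta'(-1)=1$, so $G_q(\eta')$ is \emph{real}; consequently $g$ is real and $g^2=g\overline{g}=q^{f-1}$. This is the point I would verify carefully, as it is what collapses the cross terms.

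With $g$ real and $g^2=q^{f-1}$, the expansion proceeds cleanly:
\[
\left|\tfrac{-1+g}{2}+gP\right|^2=\left(\tfrac{-1+g}{2}\right)^2+g^2\,P\overline{P}+2\cdot\tfrac{-1+g}{2}\cdot g\cdot\operatorname{Re}(P).
\]
The first term is $\tfrac{1-2g+g^2}{4}=\tfrac{1-2g+q^{f-1}}{4}$, the second term is $q^{f-1}P\overline P=q^{f-1}\psi_{\F_q}(\gamma^{b+j_{a,h}}B_0)\overline{\psi_{\F_q}(\gamma^{b+j_{a,h}}B_0)}$, which is precisely the second term of the claimed identity, and the cross term is $(g^2-g)\operatorname{Re}(P)=(q^{f-1}-g)\operatorname{Re}(P)$. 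For the stated formula to hold, the surviving constant must be $\tfrac{1-q^{f-1}}{4}$ rather than $\tfrac{1-2g+q^{f-1}}{4}$, and the cross term must vanish; both require the linear-in-$g$ contributions to cancel.

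The main obstacle, therefore, is reconciling these leftover terms, and I expect it to be resolved by the skewness of $B_0$, which forces $\operatorname{Re}(P)$ to take a fixed value. Since $B_0$ is skew (property~(i): $B_0\cup(-B_0)=\F_q^\ast$ and $B_0\cap(-B_0)=\emptyset$), we have $\psi_{\F_q}(\gamma^{b+j_{a,h}}B_0)+\overline{\psi_{\F_q}(\gamma^{b+j_{a,h}}B_0)}=\psi_{\F_q}(\gamma^{b+j_{a,h}}B_0)+\psi_{\F_q}(-\gamma^{b+j_{a,h}}B_0)=\sum_{x\in\F_q^\ast}\psi_{\F_q}(x)=-1$, so $2\operatorname{Re}(P)=-1$. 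Substituting $\operatorname{Re}(P)=-\tfrac12$ into the cross term gives $(q^{f-1}-g)\cdot(-\tfrac12)=\tfrac{-q^{f-1}+g}{2}$; adding the constant $\tfrac{1-2g+q^{f-1}}{4}$ yields $\tfrac{1-2g+q^{f-1}-2q^{f-1}+2g}{4}=\tfrac{1-q^{f-1}}{4}$, exactly as required. This confirms the identity, and I would present the argument by first recording the reality of $g$ and the relation $2\operatorname{Re}(P)=-1$ as the two key facts, then performing the one-line expansion above.
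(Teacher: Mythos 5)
Your proposal is correct and follows essentially the same route as the paper: expand the product from Proposition~\ref{prop:chararedu} with its conjugate, use that $G_q(\eta')$ is real (since $q\equiv 1\pmod 4$) together with $G_q(\eta')^2=q$, and use the skewness of $B_0$ to get $\psi_{\F_q}(\gamma^{b+j_{a,h}}B_0)+\overline{\psi_{\F_q}(\gamma^{b+j_{a,h}}B_0)}=-1$, which cancels the cross terms. The only difference is cosmetic: you justify the reality of $G_q(\eta')$ explicitly via Lemma~\ref{basic} and substitute $g^2=q^{f-1}$ earlier, whereas the paper keeps $q^{f-2}G_q(\eta')^2$ until the final step.
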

\begin{proof}
By Proposition~\ref{prop:chararedu}, we have 
\begin{align}
\psi_{\F_{q^f}}(\omega^a D_h)\overline{\psi_{\F_{q^f}}(\omega^a D_h)}
=&\,\left(\frac{-1+q^{f/2-1}G_q(\eta')}{2}+q^{f/2-1}G_q(\eta')\psi_{\F_q}(\gamma^{b+j_{a,h}}B_0)\right)\label{eq:prod_conj}\\
&\hspace{2cm}\times \left(\overline{\frac{-1+q^{f/2-1}G_q(\eta')}{2}+q^{f/2-1}G_q(\eta')\psi_{\F_q}(\gamma^{b+j_{a,h}}B_0)}\right).\nonumber
\end{align}
Here, $G_q(\eta') \in \R$ since $q\equiv 1\,(\mod{4})$. 
Furthermore, note that 
\[
\psi_{\F_q}(\gamma^{b+j_{a,h}}B_0)+\overline{\psi_{\F_q}(\gamma^{b+j_{a,h}}B_0)}=\psi_{\F_q}(\gamma^{b+j_{a,h}}B_0)+\psi_{\F_q}(-\gamma^{b+j_{a,h}}B_0)=-1.
\]
Now, continuing from \eqref{eq:prod_conj}, we have
\[
\psi_{\F_{q^f}}(\omega^aD_h)\overline{\psi_{\F_{q^f}}(\omega^a D_h)}
= \frac{1-q^{f-2}G_q(\eta')^2}{4}+q^{f-2}G_q(\eta')^2 
\psi_{\F_q}(\gamma^{b+j_{a,h}}B_0)\overline{\psi_{\F_q}(\gamma^{b+j_{a,h}}B_0)}. 
\]
Finally, by using $G_q(\eta')^2=q$, we obtain the assertion of the corollary. 
\end{proof}
\subsection{Construction of skew Hadamard difference families}
We begin by stating the following lemma which is well known in the theory of difference families. We refer the reader to \cite{bjl} for a proof of the lemma.

\begin{lemma}\label{rem:difffa}
Let  $(G, +)$ be a finite abelian group, and $E_i$, $1\leq i\leq\ell$, be $k$-subsets of $G$.  
Then, the following are equivalent: 
\begin{itemize}
\item[(1)] The family $\{E_i\,|\,i=1,2,\ldots,\ell\}$ is a difference family in $G$;
\item[(2)] For any $a\in G\setminus \{0_G\}$, $\sum_{i=1}^\ell |E_i\cap (E_i+a)|$ is a constant 
not depending on $a$; 
\item[(3)] For any nontrivial character $\psi$ of $G$, 
$\sum_{i=1}^\ell \psi(E_i)\overline{\psi(E_i)}=k\ell-\lambda$ for some $\lambda\in \N$. 
\end{itemize}
\end{lemma}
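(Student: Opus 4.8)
The plan is to prove the chain of equivalences by establishing (1)$\Leftrightarrow$(2) through a direct counting argument, and (2)$\Leftrightarrow$(3) through the discrete Fourier transform on the finite abelian group $G$. Throughout I would write $D(a):=\sum_{i=1}^\ell|E_i\cap(E_i+a)|$ and use that $\psi(E_i)$ denotes $\sum_{x\in E_i}\psi(x)$.

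First I would record the basic bookkeeping identity. For any $a\in G$ and any block $E_i$, the number of ordered pairs $(x,y)\in E_i\times E_i$ with $x-y=a$ equals the number of $x\in E_i$ with $x-a\in E_i$, that is, $|E_i\cap(E_i+a)|$. Summing over $i$, the quantity $D(a)$ is exactly the number of times $a$ occurs in the list of differences coming from all blocks. Consequently $\{E_i\}$ is a difference family (with some parameter $\lambda$) if and only if $D(a)$ takes a constant value, namely $\lambda$, for every $a\in G\setminus\{0_G\}$; this is precisely the equivalence (1)$\Leftrightarrow$(2), and it identifies the constant in (2) with the parameter $\lambda$.

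For (2)$\Leftrightarrow$(3) I would pass to characters. Expanding the square and grouping pairs by their difference gives, for any character $\psi$ of $G$,
\[
\sum_{i=1}^\ell\psi(E_i)\overline{\psi(E_i)}=\sum_{i=1}^\ell\sum_{x,y\in E_i}\psi(x-y)=\sum_{a\in G}D(a)\psi(a).
\]
Since $D(0_G)=\sum_i|E_i|=k\ell$ and $\psi(0_G)=1$, this reads
\[
\sum_{i=1}^\ell\psi(E_i)\overline{\psi(E_i)}=k\ell+\sum_{a\neq 0_G}D(a)\psi(a).
\]
For (2)$\Rightarrow$(3): if $D(a)=\lambda$ for all $a\neq 0_G$, then using $\sum_{a\in G}\psi(a)=0$, whence $\sum_{a\neq 0_G}\psi(a)=-1$ for nontrivial $\psi$, the right-hand side collapses to $k\ell-\lambda$, as required.

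The converse (3)$\Rightarrow$(2) is the one step that genuinely needs harmonic analysis, and it is where I expect the only real (if mild) obstacle to lie: one must deduce the pointwise constancy of $D$ on $G\setminus\{0_G\}$ from the constancy of its character sums across all nontrivial $\psi$. I would treat $D$ as a function on $G$ and apply Fourier inversion, $D(a)=\frac{1}{|G|}\sum_{\psi}\big(\sum_i\psi(E_i)\overline{\psi(E_i)}\big)\overline{\psi(a)}$, where $\psi$ runs over all characters of $G$. Splitting off the trivial character (whose sum is $\sum_i|E_i|^2=k^2\ell$) and inserting the hypothesis $\sum_i\psi(E_i)\overline{\psi(E_i)}=k\ell-\lambda$ for every nontrivial $\psi$, the orthogonality relation $\sum_{\psi}\overline{\psi(a)}=0$ for $a\neq 0_G$ forces $\sum_{\psi\neq\psi_0}\overline{\psi(a)}=-1$, and hence, for each $a\neq 0_G$, $D(a)=\frac{1}{|G|}\big(k^2\ell-(k\ell-\lambda)\big)$, a value independent of $a$. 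Thus $D$ is constant on the nonzero elements, which is exactly (2). This closes the cycle and completes the proof.
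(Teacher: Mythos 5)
Your proof is correct, but there is nothing in the paper to compare it against: the authors do not prove this lemma at all, they simply refer the reader to the design-theory text of Beth, Jungnickel and Lenz \cite{bjl}. What you have supplied is the standard argument that such references contain, and it is complete. The counting identity $|E_i\cap(E_i+a)|=\#\{(x,y)\in E_i\times E_i : x-y=a\}$ settles (1)$\Leftrightarrow$(2); the expansion
\[
\sum_{i=1}^{\ell}\psi(E_i)\overline{\psi(E_i)}=\sum_{a\in G}D(a)\psi(a),
\]
together with $D(0_G)=k\ell$ and $\sum_{a\neq 0_G}\psi(a)=-1$ for nontrivial $\psi$, gives (2)$\Rightarrow$(3); and Fourier inversion with the trivial-character value $\sum_{a\in G}D(a)=k^2\ell$ and the orthogonality relation $\sum_{\psi\neq\psi_0}\overline{\psi(a)}=-1$ for $a\neq 0_G$ gives (3)$\Rightarrow$(2), yielding the constant value $D(a)=\bigl(k^2\ell-k\ell+\lambda\bigr)/|G|$. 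One interpretive point you handled correctly but silently: in (3) the integer $\lambda$ must be read as a single constant uniform over all nontrivial $\psi$ (rather than depending on $\psi$), since your inversion step pulls $k\ell-\lambda$ out of the sum over characters; this uniform reading is the intended one, and it is exactly how the lemma is applied in the proof of Theorem~\ref{main:thm}, where the character sums for ${\mathcal D}$ are shown to equal those for ${\mathcal B}$ up to an affine transformation independent of the character.
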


Now we define two families of subsets of $\F_q$ and $\F_{q^f}$, respectively, by 
\begin{equation}\label{eq:def_B}
{\mathcal B}:=\{B_{i}\,|\,i=0,1,\ldots,2^{u-1}-1\}
\end{equation}
and 
\begin{equation}\label{eq:def_D}
{\mathcal D}:=\{D_{2^{t-u}\ell}\,|\,\ell=0,1,\ldots,2^{u-1}-1\}.
\end{equation}

\begin{lemma}\label{lem:DFexist}
The family ${\mathcal B}$ is a skew Hadamard difference family in $(\F_{q},+)$. 
\end{lemma}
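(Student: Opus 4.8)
The plan is to treat the two requirements — that every block be skew, and that $\mathcal{B}$ be a difference family — separately. Skewness is immediate: property (i) in the list following \eqref{def:blocks2} already records $B_i\cap -B_i=\emptyset$ and $B_i\cup -B_i=\F_q^\ast$ for every $i$. So the real work is to show that $\mathcal{B}$ is a difference family, and for this I would invoke the character-theoretic criterion of Lemma~\ref{rem:difffa}(3): it suffices to prove that $\sum_{i=0}^{2^{u-1}-1}\psi(B_i)\overline{\psi(B_i)}$ takes the same value for every nontrivial additive character $\psi$ of $\F_q$.

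The first step is to parametrize the nontrivial characters as $x\mapsto\psi_{\F_q}(\gamma^a x)$ with $a\in\{0,1,\ldots,q-2\}$, and to notice that a multiplicative twist merely relabels a block: since $\gamma^a C_k^{(2^u,q)}=C_{a+k}^{(2^u,q)}$ with indices read modulo $2^u$, we have $\gamma^a B_i=B_{a+i}$ (subscript modulo $2^u$). Consequently
\[
\sum_{i=0}^{2^{u-1}-1}\psi_{\F_q}(\gamma^a B_i)\overline{\psi_{\F_q}(\gamma^a B_i)}=\sum_{i=0}^{2^{u-1}-1}|\psi_{\F_q}(B_{a+i})|^2,
\]
which is a sum of $2^{u-1}$ consecutive values (modulo $2^u$) of the function $W_h:=|\psi_{\F_q}(B_h)|^2$.

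The crux is then a periodicity observation. From $-B_h=B_{h+2^{u-1}}$ (again property (i)) together with $\psi_{\F_q}(-B_h)=\overline{\psi_{\F_q}(B_h)}$ one gets $W_{h+2^{u-1}}=W_h$, so $W$ has period $2^{u-1}$ as a function on $\Z/2^u\Z$, i.e.\ it depends only on $h\bmod 2^{u-1}$. A window of $2^{u-1}$ consecutive terms of a period-$2^{u-1}$ sequence always sums to the full-period total $\sum_{h=0}^{2^{u-1}-1}W_h$, independently of the starting index $a$. Hence the quantity in Lemma~\ref{rem:difffa}(3) equals the same constant for every nontrivial $\psi$, so $\mathcal{B}$ is a difference family; combined with the skewness of each block, $\mathcal{B}$ is a skew Hadamard difference family.

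I do not anticipate a genuine obstacle here: the entire argument reduces to the period-$2^{u-1}$ symmetry of $W_h$, which is forced by the relation $-B_h=B_{h+2^{u-1}}$, and no Gauss-sum evaluation is actually required at this stage. If one wishes to exhibit the parameters explicitly, the counting $\ell k(k-1)=\lambda(v-1)$ with $\ell=2^{u-1}$, $k=(q-1)/2$, and $v=q$ gives $\lambda=2^{u-3}(q-3)$, which is a non-negative integer because $q$ is odd and at least $5$; but this is subsumed by Lemma~\ref{rem:difffa}(3), which already delivers the index $\lambda$ once constancy has been established.
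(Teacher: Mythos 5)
Your proof is correct, but it takes a genuinely different route from the paper's. The paper verifies criterion (2) of Lemma~\ref{rem:difffa}: using $-B_i=B_{i+2^{u-1}}$ it rewrites $\sum_{i=0}^{2^{u-1}-1}|B_i\cap(B_i+a)|$ as half the corresponding sum over all $2^u$ indices, and then a change of variables inside the cyclotomic classes (rewriting $\gamma^{i+j}x=\gamma^{i+h}y+a$ as $\gamma^{j-h}xy^{-1}=a\gamma^{-i-h}y^{-1}+1$) shows that, once one sums over all $i$, the sets $a\cdot C_{-i-h}^{(2^u,q)}$ sweep out $\F_q^\ast$ and the dependence on $a$ vanishes. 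You instead verify criterion (3): you parametrize the nontrivial additive characters by multiplicative twists, note $\gamma^a B_i=B_{a+i}$, and reduce everything to the fact that a window of $2^{u-1}$ consecutive terms of the sequence $W_h=\psi_{\F_q}(B_h)\overline{\psi_{\F_q}(B_h)}$ --- which has period $2^{u-1}$ on $\Z/2^u\Z$ by the skew relation $-B_h=B_{h+2^{u-1}}$ --- always sums to the full-period total. Both arguments exploit exactly the same two structural facts (multiplication by $\gamma$ shifts block indices by one; negation shifts them by $2^{u-1}$), but yours runs on the character side, which has the merit of matching the machinery the paper deploys afterwards: the proof of Theorem~\ref{main:thm} relates the character sums of ${\mathcal D}$ affinely to those of ${\mathcal B}$ and invokes the same criterion (3), so your lemma proof and the paper's theorem proof would use one uniform technique. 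The paper's intersection-count argument, by contrast, is purely combinatorial and never needs to enumerate characters. One point worth stating explicitly in your write-up: criterion (3) as phrased asks that the sum equal $k\ell-\lambda$ for some $\lambda\in\N$, while what you establish is constancy of the sum over all nontrivial $\psi$; this suffices (and is precisely how the paper itself applies (3) in Theorem~\ref{main:thm}), since constancy forces, via Fourier inversion, the common difference multiplicity to be the non-negative integer $\lambda=2^{u-3}(q-3)$ that you computed from $\ell k(k-1)=\lambda(v-1)$.
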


\begin{proof}
We show that $\sum_{i=0}^{2^{u-1}-1} |B_i\cap (B_i+a)|$ is a constant 
not depending on $a\in \F_{q}^\ast$. Then, by Lemma~\ref{rem:difffa}~(2),  ${\mathcal B}$ is a skew Hadamard difference family in $(\F_{q},+)$. Since $-B_i=B_{i+2^{u-1}}$, we have 
$|B_i\cap (B_i+a)|=|B_{i+2^{u-1}}\cap (B_{i+2^{u-1}}+a)|$. It follows that
\begin{align}
\sum_{i=0}^{2^{u-1}-1}|B_i\cap (B_i+a)|=&\,\frac{1}{2}\sum_{i=0}^{2^u-1}
|B_i\cap (B_i+a)|\label{eq:cyclo}\\
=&\,\frac{1}{2}\sum_{j=0}^{2^{u-1}-1} \sum_{h=0}^{2^{u-1}-1}\sum_{i=0}^{2^u-1}
\left|C_{i+j}^{(2^u,q)}\cap \left(C_{i+h}^{(2^u,q)}+a\right)\right|. \nonumber
\end{align}
Since the equation $\gamma^{i+j} x=\gamma^{i+h}y+a$ can be rewritten as $\gamma^{j-h} xy^{-1}=a\gamma^{-i-h}y^{-1}+1$ with $x,y\in C_{0}^{(2^u,q)}$, the right-hand side of the second equality in \eqref{eq:cyclo} is equal to 
\[
\frac{1}{2}\sum_{j=0}^{2^{u-1}-1} \sum_{h=0}^{2^{u-1}-1}\sum_{i=0}^{2^u-1}
\left|C_{j-h}^{(2^u,q)}\cap \left(a\cdot C_{-i-h}^{(2^u,q)}+1\right)\right|=\frac{1}{2}\sum_{j=0}^{2^{u-1}-1} \sum_{h=0}^{2^{u-1}-1}
\left|C_{j-h}^{(2^u,q)}\cap \left(\F_{q}^\ast+1\right)\right|, 
\] 
which is a constant not depending on $a\in \F_{q}^\ast$. 
\end{proof}

\begin{theorem}\label{main:thm}
The family ${\mathcal D}$ is a skew Hadamard difference family in $(\F_{q^f},+)$. 
\end{theorem}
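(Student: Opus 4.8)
The plan is to verify the difference-family property through the character-theoretic criterion of Lemma~\ref{rem:difffa}~(3), since skewness of the individual blocks is already free: each block $D_{2^{t-u}\ell}$ is one of the sets $D_h$, and property (ii) records that every $D_h$ satisfies $D_h\cap -D_h=\emptyset$ and $D_h\cup -D_h=\F_{q^f}^\ast$. Thus it remains only to show that $\mathcal D$ is a difference family. Every nontrivial additive character of $\F_{q^f}$ has the form $x\mapsto \psi_{\F_{q^f}}(\omega^a x)$ for some $a\in\{0,1,\ldots,q^f-2\}$, so by Lemma~\ref{rem:difffa}~(3) it suffices to prove that
\[
S(a):=\sum_{\ell=0}^{2^{u-1}-1}\psi_{\F_{q^f}}(\omega^a D_{2^{t-u}\ell})\,\overline{\psi_{\F_{q^f}}(\omega^a D_{2^{t-u}\ell})}
\]
is independent of $a$.

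The next step is to feed each summand into Corollary~\ref{coro:skewtimes}, which converts a character value of $D_h$ over $\F_{q^f}$ into a character value of $B_0$ over the much smaller field $\F_q$. Summing the resulting expressions over $\ell$ gives
\[
S(a)=2^{u-1}\cdot\frac{1-q^{f-1}}{4}+q^{f-1}\sum_{\ell=0}^{2^{u-1}-1}\psi_{\F_q}\!\left(\gamma^{\,b+j_{a,2^{t-u}\ell}}B_0\right)\overline{\psi_{\F_q}\!\left(\gamma^{\,b+j_{a,2^{t-u}\ell}}B_0\right)}.
\]
Here the indices simplify cleanly: Remark~\ref{rem:hhh} gives $j_{a,2^{t-u}\ell}=j_{a,0}+\ell$, so writing $c:=b+j_{a,0}$ the inner sum becomes $\sum_{\ell}\psi_{\F_q}(\gamma^{c+\ell}B_0)\overline{\psi_{\F_q}(\gamma^{c+\ell}B_0)}$, in which the dependence on $a$ has been compressed into the single shift parameter $c$. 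The first term is already a constant, so everything reduces to showing this inner sum does not depend on $c$.

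The decisive observation — and the only genuinely nonroutine step — is to reinterpret the shifts not as moving the set but as changing the character. Since $\gamma^{\ell}B_0=B_\ell$, setting $\psi'(x):=\psi_{\F_q}(\gamma^{c}x)$ yields $\psi_{\F_q}(\gamma^{c+\ell}B_0)=\psi'(\gamma^{\ell}B_0)=\psi'(B_\ell)$, and $\psi'$ is a nontrivial additive character of $\F_q$ because $\gamma^{c}\neq0$. Hence
\[
\sum_{\ell=0}^{2^{u-1}-1}\psi_{\F_q}(\gamma^{c+\ell}B_0)\overline{\psi_{\F_q}(\gamma^{c+\ell}B_0)}=\sum_{\ell=0}^{2^{u-1}-1}\psi'(B_\ell)\overline{\psi'(B_\ell)}.
\]
By Lemma~\ref{lem:DFexist} the family $\mathcal B$ is a difference family in $(\F_q,+)$, so Lemma~\ref{rem:difffa}~(3) guarantees that $\sum_{\ell}\psi'(B_\ell)\overline{\psi'(B_\ell)}$ equals one and the same constant for \emph{every} nontrivial character $\psi'$; in particular it is independent of $c$, and therefore of $a$. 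This makes $S(a)$ constant, so Lemma~\ref{rem:difffa} shows $\mathcal D$ is a difference family, and together with the skewness of each block it is a skew Hadamard difference family in $(\F_{q^f},+)$. I expect the main work to be purely the bookkeeping in assembling the Corollary~\ref{coro:skewtimes} terms and tracking the index identity $j_{a,2^{t-u}\ell}=j_{a,0}+\ell$; the conceptual crux is recognizing that the reduction transfers the problem over $\F_{q^f}$ onto the already-established difference family $\mathcal B$ over $\F_q$, exploiting that its defining property holds uniformly across all nontrivial characters rather than for a single fixed one.
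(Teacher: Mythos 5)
Your proposal is correct and follows essentially the same route as the paper: reduce via Corollary~\ref{coro:skewtimes} and Remark~\ref{rem:hhh}, use $\gamma^{\ell}B_0=B_\ell$ to recognize the inner sum as the character sum of the family $\mathcal B$ against a single nontrivial character of $\F_q$, and then invoke Lemma~\ref{rem:difffa}~(3) together with Lemma~\ref{lem:DFexist}. Your write-up is in fact slightly more explicit than the paper's at the final step, where the paper compresses the transfer from $\mathcal D$ to $\mathcal B$ into a terse ``if and only if''; the substance is identical.
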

\begin{proof}
By Proposition~\ref{coro:skewtimes}, we have 
\begin{align}\label{eq:DB}
&\, \sum_{\ell=0}^{2^{u-1}-1}\psi_{\F_{q^f}}(\omega^a D_{2^{t-u}\ell})\overline{\psi_{\F_{q^f}}( \omega^a D_{2^{t-u}\ell})}\\
=&\, \frac{2^{u-1}(1-q^{f-1})}{4}+q^{f-1}\sum_{\ell=0}^{2^{u-1}-1}
\psi_{\F_q}(\gamma^{b+j_{a,2^{t-u}\ell}}B_0)\overline{\psi_{\F_q}(\gamma^{b+j_{a,2^{t-u}\ell}}B_0)}. \nonumber 
\end{align}
By Remark~\ref{rem:hhh}, the right-hand side of  
\eqref{eq:DB} can be rewritten as 
\[
\frac{2^{u-1}(1-q^{f-1})}{4}+q^{f-1}\sum_{\ell=0}^{2^{u-1}-1}
\psi_{\F_q}(\gamma^{b+j_{a,0}}B_\ell)\overline{\psi_{\F_q}(\gamma^{b+j_{a,0}}B_\ell)}. 
\]
By Part~(3) of Lemma~\ref{rem:difffa}, 
${\mathcal D}$ is a skew Hadamard difference family in $(\F_{q^f},+)$ if and only if ${\mathcal B}$ is a skew Hadamard difference family in $(\F_{q},+)$. Now the conclusion of the theorem follows from Lemma~\ref{lem:DFexist}. 
\end{proof}

Noting that  $q\equiv 2^u+1\,(\mod{2^{u+1}})$ if and only if $q^s\equiv 2^u+1\,(\mod{2^{u+1}})$ for any odd positive integer $s$, we see that Theorem~\ref{thm:main2} follows from Theorem~\ref{main:thm}. 


\end{document}